\theoremstyle{plain}
\renewcommand{\theequation}{\arabic{section}.\arabic{equation}}
\renewcommand\thefigure{\@arabic\c@figure}
\newtheorem{thm}{\bf Theorem}
\newenvironment{theorem}{\begin{thm}} {\end{thm}}
\newtheorem{cor}{\bf Corollary}
\newtheorem{lmm}{\bf Lemma}
\newenvironment{lemma}{\begin{lmm}}{\end{lmm}}
\theoremstyle{remark}
\newtheorem{rem}{\bf Remark}
\def \epsilon {{\varepsilon}}
\definecolor{bgblue}{rgb}{0.04,0.39,0.54}
\definecolor{lired}{rgb}{0.3, 0.0, 0.0}
\definecolor{ligreen}{rgb}{0.0, 0.3, 0.0}
\definecolor{liblue}{rgb}{0.9, 1.0, 1.0}
\definecolor{gray}{rgb}{0.6, 0.6, 0.6}
\definecolor{sky}{rgb}{0.3, 1.0, 1.0}
\definecolor{bunhong}{rgb}{1.0, 0.3, 1.0}
\definecolor{yellow}{rgb}{0.97, 1, 0.0}
\definecolor{liyellow}{rgb}{0.9, 0.8, 0.0}
\definecolor{cengse}{rgb}{0.00,0.40,0.29}
\renewcommand \wedge \times
\begin{document}
\bibliographystyle{plain}
{\title[Implicit-explicit BDF$k$ SAV schemes] {Implicit-explicit BDF$k$ SAV schemes for general dissipative systems and their error analysis}
\author[
	F. Huang and J. Shen
	]{
	Fukeng Huang and Jie Shen
		}
	\thanks{Department of Mathematics, Purdue University. This research  is partially supported by NSF  DMS-2012585
and  AFOSR FA9550-20-1-0309. Emails: huang972@purdue.edu (F. Huang), shen7@purdue.edu (J. Shen).}

\keywords{dissipative system; error analysis; SAV approach; energy stability, high-order BDF scheme}
\subjclass[1991]{65M12; 35K20; 35K35; 35K55}

\begin{abstract}
We construct  efficient implicit-explicit BDF$k$   scalar auxiliary variable (SAV) schemes  for general dissipative systems. We show that these schemes are unconditionally  stable, and lead to a uniform bound of the numerical solution in the norm based on the  principal linear operator in the energy. Based on this uniform bound, we carry out a rigorous error analysis for the $k$th-order $(k=1,2,3,4,5)$ SAV schemes  in a unified form for a class of typical Allen-Cahn type and Cahn-Hilliard type equations. We also present numerical results confirming our theoretical convergence rates.

\end{abstract}
 \maketitle
\section{Introduction} The original scalar auxiliary variable (SAV) approach   proposed in \cite{SXY18, SXY19} is a powerful approach to construct efficient time discretization schemes for  gradient flows. Due to its simplicity, efficiency and generality, it attracted much attention and has been applied to various problems (see, for instance, \cite{Shen19,She.Y20} and the references therein).

While analysis of standard semi-implicit schemes for gradient flows usually requires to assume  global Lipschitz condition on the nonlinear term (see, for instance, \cite{kessler2004posteriori,shen2010numerical,condette2011spectral}),
 the convergence of SAV schemes can be established without such assumption thanks to the unconditional energy stability. For examples,
 rigorous error analysis of the semi-discretized first order original SAV schemes for  ${L}^2$ and  $H^{-1}$ gradient flows with minimum assumptions have been presented in \cite{SX18}, first- and second-order error estimates have been derived for a related semi-discretized g{PAV} scheme for the Cahn-Hilliard equation in \cite{qian2020gpav}, and  error analysis of fully discretized SAV schemes with finite differences and finite-elements have also been established in \cite{LSR19} and \cite{CMS20}.   On the other hand, error estimates for a  Fourier-spectral  SAV scheme for the phase-field crystal equation \cite{Li.S20}
 and a MAC-SAV scheme for the Navier-Stokes equation  \cite{Li.S20SINUM} are  established.
 Note that for the original SAV approach,  unconditional energy stability can  only be established for first- and second-order BDF schemes, although it has been shown in \cite{ALL19} (see also \cite{DLi.S20}) that the SAV approach coupled with  extrapolated and linearized  Runge-Kutta methods  can achieve arbitrarily high order unconditionally energy stable with a modified energy for the Allen-Cahn and Cahn-Hilliard equations, but  require solving coupled linear systems.

Most recently, a new SAV approach for gradient flow is proposed in \cite{HSY20} which offers some essential improvements over the original SAV approach such as (i) its computational cost is about half of the original SAV approach, and (ii) its higher-order BDF versions are also unconditionally stable with a modified energy. While ample numerical results in \cite{HSY20} have shown that the new higher-order SAV schemes are indeed stable and can achieve higher-order accuracy, the modified energy is represented only by a SAV which does not involve any function norm, so it is difficult to carry out convergence and error analysis. See however \cite{qian2020gpav} for an attempt on the error analysis for related  first- and second-orer gPAV-based schemes for the Cahn-Hilliard equation.

In this paper, we apply the general ideas in \cite{HSY20} to construct a class of explicit-implicit BDF$k$ schemes for general dissipative systems. In particular, we choose a special control factor $\eta_k^{n+1}$ (cf.  \eqref{eq: Nsav2}) for the $k$th-order scheme which allows us to obtain a unconditional and uniform bound on the  norm based on principal linear term in  the energy functional of the dissipative system. This bound is essential for the  error analysis in this paper.
The main purpose of this paper is to carry out a rigorous and unified error analysis for the  $k$th-order ($1\le k\le 5$) SAV schemes which enjoy  several  advantages, including:
\begin{itemize}
\item  only requires solving, in most common situations,  one linear system with constant coefficients at each time step, so its computational cost is essentially the same as the usual implicit-explicit (IMEX) schemes;
\item  applicable to  general dissipative systems;
\item  higher-order BDF$k$ SAV schemes are unconditionally stable and amenable to adaptive time stepping without restriction on time step size;
\item rigorous error estimates can be established for  BDF$k$ ($1\le k\le 5$) SAV schemes.
\end{itemize}

While these SAV schemes are applicable to a large class of dissipative systems,  their  error analysis is highly non-trivial, particularly at higher than second order. Since  a unified analysis for general dissipative systems will  involve  complicated assumptions and techniques that may obscure the clarity of presentation, we shall consider the error analysis for two classes of typical dissipative systems: Allen-Cahn type and Cahn-Hilliard type equations. The key ingredients are  the  uniform $H^1$ bound derived from the general stability result (see \eqref{eq: bound} in Theorem \ref{stableThm}) and a  stability result in \cite{nevanlinna1981} (see Lemma \ref{lemmaODEH} below) for the BDF$k$ ($1\le k\le 5$) schemes. With a delicate induction argument, we are able to establish optimal error estimates in $L^\infty(0,T;H^2)$ norm for our implicit-explicit BDF$k$ ($1\le k\le 5$) SAV  schemes for both Allen-Cahn type and Cahn-Hilliard type equations. 

The rest of the paper is organized as follows. In the next section, we describe out  new SAV schemes for general dissipative systems in a unified form,  prove its unconditionally stability, and provide some numerical results to demonstrate the convergence rate. In section 3, we present the detailed proof for the $k$th-order schemes $(k=1,2,3,4,5)$ in a unified form for Allen-Cahn type equations. In section 4, we present  error analysis for Cahn-Hilliard type equations.  Some concluding remarks are given in the last section.

We use the following notations throughout the paper. Let  $\Omega \in \mathcal{R}^n\; (n=1,2,3)$ be a bounded domain with sufficiently smooth boundary. We denote by $(\cdot, \cdot)$ and $\|\cdot\|$ the inner product and the norm in $L^2(\Omega)$, and by $H^s(\Omega)$ the usual Sobolev spaces with norm $\|\cdot\|_{H^s}$. Let $V$ be a Banach space, we shall also use the standard notations
  $L^p(0,T;V)$ and $C([0,T];V)$. To simplify the notation, we often omit the spatial dependence in the notation for the exact solution $u$, namely we denote $u(x,t)$ by $u(t)$.  We shall use $C$ to denote a constant which can change from one step to another, but is independent of $\delta t$.

\section{New SAV schemes  for dissipative systems}
In this section, we describe  the new SAV schemes for dissipative systems, show that they are unconditionally energy stable with a modified energy and derive a uniform bound for the norm based on the principal linear term in the energy functional.

Consider the following class of   dissipative systems
\begin{equation}\label{dissE}
 \frac{\partial u}{\partial t} +\mathcal{A} u+g(u)=0,
\end{equation}
where $u$ is a scalar or vector function, $\mathcal{A}$ is a positive differential operator and $g(u)$ is a nonlinear operator possibly with lower-order derivatives. We assume that the above equation satisfies
a dissipative energy law
\begin{equation}\label{dissL}
\frac{d \tilde E(u)}{dt}=-\mathcal{K}(u),
\end{equation}
where $ \tilde E(u)> -C_0$ for all $u$ is an energy functional,  $\mathcal{K}(u)> 0$ for all $u\ne 0$.

The above class of dissipative systems include in particular gradient flows but also other dissipative systems which do not have the gradient structure, such as viscous Burgers equation, reaction-diffusion equations etc.

\subsection{The new SAV schemes}
The key for the SAV approach is to introduce a scalar auxiliary variable (SAV) to rewrite  \eqref{dissE} as an expanded system, and to discretize the expanded system instead of the original \eqref{dissE}. In this paper, we introduce the following new SAV approach inspired by the SAV schemes introduced in \cite{HSY20}

Setting $r(t)=E(u)(t):=\tilde E(u)(t)+C_0>0$,  we  rewrite the equation \eqref{dissE}  with the energy law \eqref{dissL} as the following expanded system
\begin{align}
& \frac{\partial u}{\partial t}+\mathcal{A} u +g(u)=0, \label{eq: new1}\\
&\frac{d E(u)}{dt} =-\frac{r(t)}{E(u)(t)}\mathcal{K}(u).\label{eq: new2}
\end{align}

We construct the $k$th  order new SAV schemes based on the implicit-explicit BDF-$k$ formulae in the following unified form:

Given $u^n,\,r^n$, we compute $\bar u^{n+1},\, r^{n+1}, \, \xi^{n+1}$ and $u^{n+1}$ consecutively by
\begin{subequations}\label{eq: Nsav}
\begin{align}
&\frac{\alpha_k \bar{u}^{n+1}-A_k(u^n)}{ \delta t}+\mathcal{A} \bar u^{n+1}+g[B_k(\bar{u}^n)])=0,\label{eq: Nsav1}\\
& \frac{1}{\delta t}\big(r^{n+1}-r^{n}\big)=-\frac{r^{n+1}}{E(\bar{u}^{n+1})}\mathcal{K}(\bar{u}^{n+1}),\label{eq: Nsav3}\\
& \xi^{n+1}=\frac{r^{n+1}}{E(\bar{u}^{n+1})},\label{eq: Nsav4}\\
&u^{n+1}=\eta_{k}^{n+1}\bar{u}^{n+1} \;\text{ with } \eta^{n+1}_{k}=1-(1-\xi^{n+1})^{k+1}.\label{eq: Nsav2}
\end{align}
\end{subequations}
where  $\alpha_k,$ the operators $A_k$ and $B_k$ $(k=1,2,3,4,5)$ are given by:
\begin{itemize}
\item[first-order:]
\begin{equation}\label{eq:bdf1}
\alpha_1=1, \quad A_1(u^n)=u^n,\quad B_1(\bar {u}^n)=\bar {u}^n;
\end{equation}
\item[second-order:]
\begin{equation}\label{eq:bdf2}
\alpha_2=\frac{3}{2}, \quad A_2(u^n)=2u^n-\frac{1}{2}u^{n-1},\quad B_2(\bar {u}^n)=2\bar {u}^n-\bar {u}^{n-1};
\end{equation}
\item[third-order:]
\begin{equation}\label{eq:bdf3}
\alpha_3=\frac{11}{6}, \quad A_3(u^n)=3u^n-\frac{3}{2}u^{n-1}+\frac{1}{3}u^{n-2},\quad B_3(\bar {u}^n)=3\bar {u}^n-3\bar {u}^{n-1}+\bar u^{n-2};
\end{equation}
\item[fourth-order:]
\begin{equation}\label{eq:bdf4}
\alpha_4=\frac{25}{12}, \; A_4(u^n)=4u^n-3u^{n-1}+\frac{4}{3}u^{n-2}-\frac{1}{4}u^{n-3},\; B_4(\bar {u}^n)=4\bar {u}^n-6\bar {u}^{n-1}+4\bar {u}^{n-2}-\bar {u}^{n-3}.\end{equation}
\item[fifth-order:]
\begin{equation}\label{eq:bdf5}
\begin{split}
\alpha_5=\frac{137}{60}, \quad & A_5(u^n)=5u^n-5u^{n-1}+\frac{10}{3}u^{n-2}-\frac{5}{4}u^{n-3}+\frac{1}{5}u^{n-4},\\
& B_5(\bar {u}^n)=5\bar {u}^n-10\bar {u}^{n-1}+10\bar {u}^{n-2}-5\bar {u}^{n-3}+\bar {u}^{n-4}.
\end{split}\end{equation}
\end{itemize}
Several remarks are in order:
\begin{itemize}
\item Initialization: the second-order scheme can be initialized with a first-order scheme for the first step, the $k$th-order scheme can be initialized with a $k-1$th-order Runge-Kutta method for the first $k-1$ steps.
\item  We observe from \eqref{eq: Nsav3} that $r^{n+1}$ is a first order approximation to $E(u(\cdot,t_{n+1}))$ which implies that $\xi^{n+1}$ is a  first order approximation to 1.
\item
we observe from \eqref{eq: Nsav1} and \eqref{eq: Nsav2} that
\begin{equation}\label{Nsav4}
 \frac{\alpha_k u^{n+1}-\eta_k^{n+1} A_k(u^n)}{ \delta t}+\mathcal{A} u^{n+1}+\eta_k^{n+1}g[B_k(\bar{u}^n)]=0,
\end{equation}
which, along with   \eqref{eq: Nsav2}, implies that
\begin{equation*}
 \frac{\alpha_k u^{n+1}-A_k(u^n)}{ \delta t}+\mathcal{A} u^{n+1}+g[B_k({u}^n)]=O(\delta t^k).
\end{equation*}
Hence, both $u^{n+1}$ and $\bar u^{n+1}$ are formally  $k$th order approximations for $u(\cdot, t^{n+1})$.

\item The main difference of the above scheme from the scheme in \cite{HSY20} is the choice of $\eta_k^{n+1}$, which can be considered as a special case in \cite{HSY20}. However, as we show below, this choice allows us to obtain a uniform bound on $(\mathcal{L}u^n,u^n)$, which in turn plays a crucial role in the error analysis. Another slight difference is here we use $g[B_k(\bar{u}^n)]$ in \eqref{eq: Nsav1}, which makes the error analysis slightly easier, while $g[B_k({u}^n)]$ is used in  \cite{HSY20}. Thanks to  \eqref{eq: Nsav2}, this does not affect the $k$th order accuracy nor unconditional energy stability.

\item Since the energy stability is achieved through only \eqref{eq: Nsav3}, we can replace \eqref{eq: Nsav1} by other types of explicit-implicit multistep schemes.

\end{itemize}

The above scheme can be efficiently implemented as follows:
\begin{enumerate}[i.]
\item Obtain $\bar u^{n+1}$ from \eqref{eq: Nsav1} by solving an equation of the form
$$(\frac{\alpha_k}{\delta t} I +\mathcal{A})\bar u^{n+1}=f^{n+1},$$
where $f^{n+1}$ includes all known terms from previous time steps, and in most cases,  this is a linear equation with constant coefficients;
\item  With $\bar u^{n+1}$  known, determine $r^{n+1}$ explicitly  from \eqref{eq: Nsav3};
\item Compute $\xi^{n+1}$, $\eta_k^{n+1}$ and  $u^{n+1}$ from \eqref{eq: Nsav2}, goto the next step.
\end{enumerate}
The main computational cost of this scheme is to solve \eqref{eq: Nsav1} once, while the main computational  cost in the original SAV approach is to solve an equation similar to \eqref{eq: Nsav1} twice. So the cost of this scheme is about half of the original SAV approach while enjoying the same unconditional energy stability as we show below.

\subsection{A stability result}
We have the following results concerning the stability of the above schemes.
\begin{theorem}\label{stableThm}
Given $r^n \ge 0$, we have $r^{n+1} \ge 0$, $\xi^{n+1} \ge 0$, and the scheme \eqref{eq: Nsav} for any $k$ is unconditionally energy stable in the sense that
\begin{equation}\label{eq: energystable}
r^{n+1}-r^{n}=-\delta t \xi^{n+1}\mathcal{K}(\bar{u}^{n+1}) \le 0.
\end{equation}
Furthermore,  if $E(u)=\frac 12 (\mathcal{L}u,u)+E_1(u)$ with $\mathcal{L}$ positive and  $E_1(u)$ bounded from below, there exists $M_k>0$ such that
\begin{equation}\label{eq: bound}
(\mathcal{L}u^n, u^n) \le M^2_k,\,\forall n.
\end{equation}
\end{theorem}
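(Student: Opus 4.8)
The plan is to dispatch the stability assertion by a one-line algebraic solve and then to reduce \eqref{eq: bound} to a compactness statement about a scalar function of $\xi^n$; the genuine difficulty sits entirely in showing that $\xi^n$ cannot escape to infinity.

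First I would prove the stability part. Solving \eqref{eq: Nsav3} for $r^{n+1}$ gives
$$ r^{n+1}=\frac{r^n}{1+\delta t\,\mathcal K(\bar u^{n+1})/E(\bar u^{n+1})}. $$
Because $\mathcal K(\bar u^{n+1})\ge 0$ and $E(\bar u^{n+1})>0$, the denominator is at least $1$, so $r^n\ge 0$ propagates to $r^{n+1}\ge 0$; an induction gives $r^n\ge 0$ for all $n$, and then $\xi^{n+1}\ge 0$ from \eqref{eq: Nsav4}. Inserting \eqref{eq: Nsav4} into \eqref{eq: Nsav3} produces \eqref{eq: energystable} directly, and since $\xi^{n+1}\mathcal K(\bar u^{n+1})\ge 0$ this also shows $r^{n+1}\le r^n$, hence $0\le r^n\le r^0$ for all $n$. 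This uniform bound on the modified energy is all that the second part borrows from the first.

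For \eqref{eq: bound} I would use $u^n=\eta_k^n\bar u^n$ to write $(\mathcal L u^n,u^n)=(\eta_k^n)^2(\mathcal L\bar u^n,\bar u^n)$, and then trade $\bar u^n$ for scalars. With $E_1\ge -C_1$, the splitting of $E$ gives $(\mathcal L\bar u^n,\bar u^n)=2E(\bar u^n)-2E_1(\bar u^n)\le 2E(\bar u^n)+2C_1$, while \eqref{eq: Nsav4} supplies $E(\bar u^n)=r^n/\xi^n$. This collapses the claim to
$$ (\mathcal L u^n,u^n)\le 2r^n\,\frac{(\eta_k^n)^2}{\xi^n}+2C_1\,(\eta_k^n)^2, $$
so it suffices to bound the two scalar maps $\xi\mapsto(1-(1-\xi)^{k+1})^2/\xi$ and $\xi\mapsto(1-(1-\xi)^{k+1})^2$ over the set of values taken by $\xi^n$.

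The decisive step, which I expect to be the main obstacle, is to confine $\xi^n$ to a fixed compact interval $[0,\Xi]$ independent of $n$ and $\delta t$. Since $\xi^n=r^n/E(\bar u^n)$ and $r^n\le r^0$, the only way $\xi^n$ can blow up is if $E(\bar u^n)\to 0$; I therefore need a uniform positive floor $E(v)\ge\underline E>0$. This is exactly what the admissible freedom in the shift $C_0$ provides: as $\tilde E$ (equivalently $E_1$) is bounded from below, $C_0$ may be fixed so that $\inf_v E(v)=\inf_v\tilde E(v)+C_0>0$, giving $\xi^n\le r^0/\underline E=:\Xi$. At the other end, the behaviour near $\xi=0$ (where $E(\bar u^n)=r^n/\xi^n$ may be large) is tamed precisely by the design of $\eta_k$: since $\eta_k(\xi)=1-(1-\xi)^{k+1}=(k+1)\xi+O(\xi^2)$ vanishes at $\xi=0$, the quotient $(\eta_k)^2/\xi$ extends continuously to $0$, so both scalar maps are continuous on the compact $[0,\Xi]$ and hence bounded by constants $\bar P_k,\bar Q_k$. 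Plugging back, $(\mathcal L u^n,u^n)\le 2r^0\bar P_k+2C_1\bar Q_k=:M_k^2$, uniformly in $n$, which is \eqref{eq: bound}. Everything after the positive floor on $E$ is routine continuity on a compact interval and is manifestly independent of $\delta t$.
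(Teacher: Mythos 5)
Your proposal is correct and follows essentially the same route as the paper's own proof: the stability part is verbatim the paper's (solve \eqref{eq: Nsav3} for $r^{n+1}$, observe the denominator exceeds $1$), and the bound \eqref{eq: bound} rests in both cases on the monotone bound $r^n\le r^0$, the normalization of $E_1$ so that $E$ has a positive floor, and the fact that $\eta_k(\xi)=1-(1-\xi)^{k+1}$ vanishes linearly at $\xi=0$, which is what makes $(\eta_k^n)^2(\mathcal{L}\bar u^n,\bar u^n)$ uniformly bounded. The only difference is bookkeeping: the paper factors $\eta_k^{n+1}=\xi^{n+1}P_k(\xi^{n+1})$ and keeps the quadratic form explicit through $|\eta_k^{n+1}|\le M_k/\big((\mathcal{L}\bar u^{n+1},\bar u^{n+1})+2\big)$, whereas you eliminate it via the identity $E(\bar u^n)=r^n/\xi^n$ and finish by continuity of $\xi\mapsto\eta_k(\xi)^2/\xi$ on a compact interval --- the same estimate in different packaging.
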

\begin{proof}
Given $r^n \ge 0$ and since $E[\bar{u}^{n+1}] >0$, it follows from \eqref{eq: Nsav3} that
\begin{equation*}
r^{n+1}=\frac{r^{n}}{1+\delta t\frac{(\mathcal{K}(\bar{u}^{n+1}) }{E[\bar{u}^{n+1}]}} \ge 0.
\end{equation*}
Then we derive from \eqref{eq: Nsav4} that $\xi^{n+1} \ge 0$ and obtain \eqref{eq: energystable}.

Denote $M:=r^0=E[u(\cdot,0)]$, then \eqref{eq: energystable} implies $r^n \le M,\, \forall n$.

Without loss of generality, we can assume $E_1(u)>1$ for all $u$. It then follows from \eqref{eq: Nsav4}  that
\begin{equation}\label{eq: xibound}
|\xi^{n+1}|=\frac{r^{n+1}}{E(\bar{u}^{n+1})} \le \frac{2M}{(\mathcal{L}\bar{u}^{n+1}, \bar{u}^{n+1})+2}.
\end{equation}
Let $\eta_k^{n+1}=1-(1-\xi^{n+1})^{k+1}$, we have $\eta_k^{n+1}=\xi^{n+1}P_{k}(\xi^{n+1})$ with   $P_{k}$ being a polynomial  of degree $k$. Then, we derive from \eqref{eq: xibound} that  there exists $M_k>0$ such that
\begin{equation*}\label{eq: etabound}
|\eta_k^{n+1}|= |\xi^{n+1}P_{k}(\xi^{n+1})| \le \frac{{M_k}}{(\mathcal{L}\bar u^{n+1}, \bar u^{n+1})+2},
\end{equation*}
which, along with  $u^{n+1}=\eta_k^{n+1}\bar u^{n+1}$, implies
\begin{equation*}
\begin{split}
(\mathcal{L}u^{n+1}, u^{n+1})&=(\eta_k^{n+1})^2(\mathcal{L}\bar{u}^{n+1}, \bar{u}^{n+1}) \\
&\le \big(\frac{{M_k}}{(\mathcal{L}\bar u^{n+1}, \bar u^{n+1})+2}\big)^2(\mathcal{L}\bar{u}^{n+1}, \bar{u}^{n+1}) \le M^2_k.
\end{split}
\end{equation*}
The proof is complete.
\end{proof}
\begin{rem}
From the above proof, we observe that it is essential to introduce $\bar u^{n+1}$ and $\eta_k^{n+1}$ in order to obtain \eqref{eq: bound}, and that the bound constant $M_k$ increases as $k$ increases. So while we can replace $k+1$ in $\eta_k^{n+1}$  by any larger integer without affecting the $k$th order accuracy, it is best to use the smallest possible integer, which is $k+1$ for $k$th order accuracy.
\end{rem}

\begin{rem}
 Note that the proof of \eqref{eq: bound} does not depend on specific form of  \eqref{eq: Nsav1}, so the result is also valid if we replace  \eqref{eq: Nsav1} in the scheme by other implicit-explicit  multistep schemes.
\end{rem}




\subsection{Numerical examples}
Before we start the error analysis,  we provide numerical examples to validate the convergence rates and demonstrate the advantage of our approach with the usual IMEX scheme. 
\textit{Example 1.} Consider the
Allen-Cahn equation
\begin{equation}\label{eq: AC1}
\frac{\partial u}{\partial t}=\alpha \Delta u- (1-u^2) u+f,
\end{equation}
 and the Cahn-Hilliard equation
 \begin{equation}\label{eq: CH1}
\frac{\partial u}{\partial t}=-m_0\Delta (\alpha \Delta u-(1-u^2)u)+f,
\end{equation}
in $\Omega=(0,2)\times (0,2)$ with periodic boundary condition, and $f$ is chosen such that
the  exact solution is
\begin{equation}
u(x,y,t)=\exp\big(\sin(\pi x)\sin(\pi y)\big)\sin(t).
\end{equation}
We set $\alpha=0.01^2$ in \eqref{eq: AC1} and $\alpha=0.04$, $m_0=0.005$ in \eqref{eq: CH1}, and use the Fourier spectral method with $64\times 64$ modes for space discretization so that the spatial discretization error is negligible when compared with the time discretization error.
 In Figures \ref{fig: ACtest1} (resp. \ref{fig: CHtest1}), we  plot the  convergence rate of the $H^2$ error at $T=1$  for the Allen-Cahn (resp. Cahn-Hilliard) equation. We    observe  the expected convergence rates  for all  cases.

\begin{figure}[htbp]
\begin{center}
  \subfigure[BDF$1$ and BDF$2$ vs errors of $u$ ]{ \includegraphics[scale=.35]{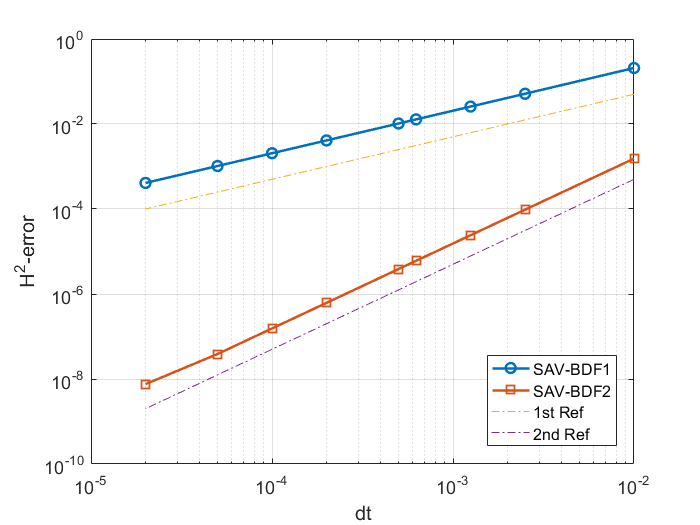}}
   \subfigure[BDF$3,4,5 $ vs errors of $u$]{ \includegraphics[scale=.35]{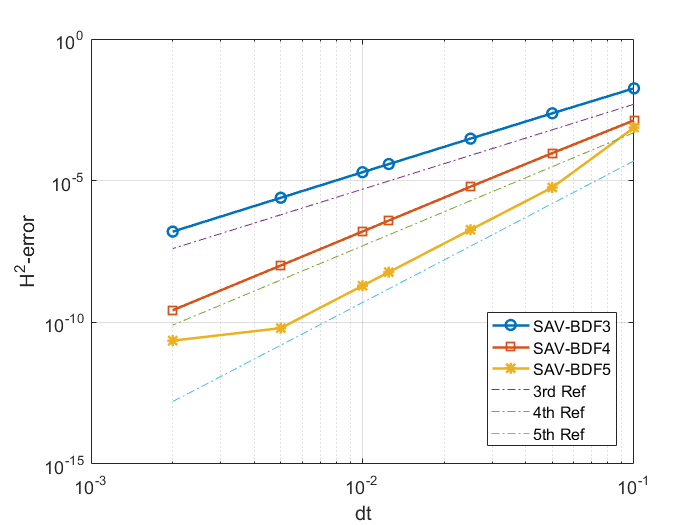}}\\
 \caption{\small   Convergence test for the Allen-Cahn equation using the new SAV/BDF$k$ $(k=1,2,3,4,5)$. (a)-(b) $H^2$ errors of $u$ as a function of $\Delta{t}$.}
   \label{fig: ACtest1}
\end{center}
\end{figure}
\begin{figure}[htbp]
\begin{center}
  \subfigure[BDF$1$ and BDF$2$ vs errors of $u$ ]{ \includegraphics[scale=.35]{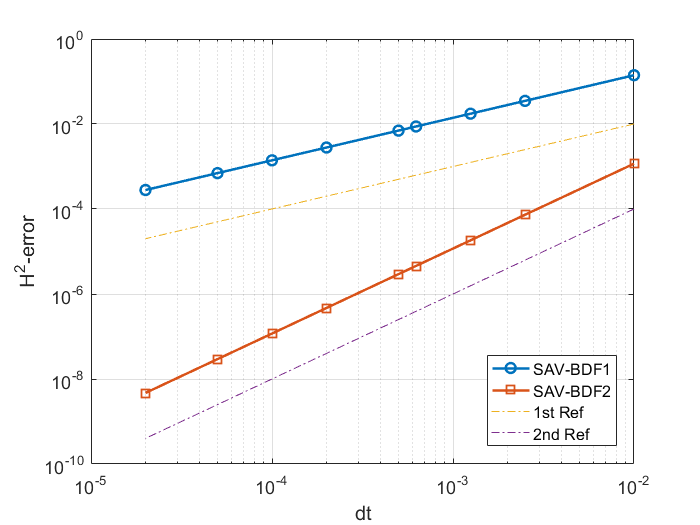}}
   \subfigure[BDF$3,4,5 $ vs errors of $u$]{ \includegraphics[scale=.35]{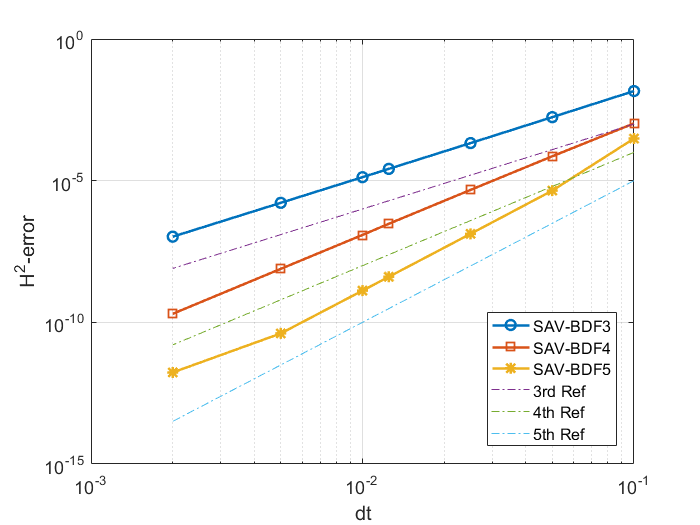}}\\
 \caption{\small  Convergence test for the Cahn-Hilliard equation using the new SAV/BDF$k$ $(k=1,2,3,4,5)$. (a)-(b) $H^2$ errors of $u$ as a function of $\Delta{t}$.}
   \label{fig: CHtest1}
\end{center}
\end{figure}

\textit{Example 2.} Next, we consider  the 1-D Burgers equation 
\begin{equation}\label{eq: BE}
\frac{\partial u}{\partial t}-\nu u_{xx}+  u u_x=0,
\end{equation}
in $\Omega=(-1,1)$ with the initial condition and Dirichlet boundary condition given as
\begin{equation}
u(x,0)=-\sin(\pi x), \quad u(\pm 1,t)=0.
\end{equation}
In this test, we use the second order SAV scheme and the corresponding second-order IMEX scheme with
  $\nu=\frac{1}{314}$, $N=320$, $\delta t=8.5\times 10^{-3}$. The numerical solutions at $T=1$ are plotted in Fig \ref{fig: BEtest2} (a)  solution obtained by the usual IMEX scheme and  (b) solution obtained by the SAV scheme. We observe that the usual IMEX scheme produces oscillatory solutions while the SAV scheme produces the correct solution which is indistinguishable with the reference solution obtained with  $\delta t= 10^{-4}$ in \ref{fig: BEtest2} (c). We also plot in \ref{fig: BEtest2} (d) the SAV factor $\eta^n=1-(1-\xi^n)^3$. We observe that when the solution exhibits large gradients (for $t\in (0.5,1)$), the  SAV factor $\eta^n$ deviates slightly from 1 so that the SAV scheme still produces correct result while the corresponding IMEX scheme produces incorrect result.
  
\begin{figure}[htbp]
\begin{center}
  \subfigure[ usual IMEX scheme ]{ \includegraphics[scale=.35]{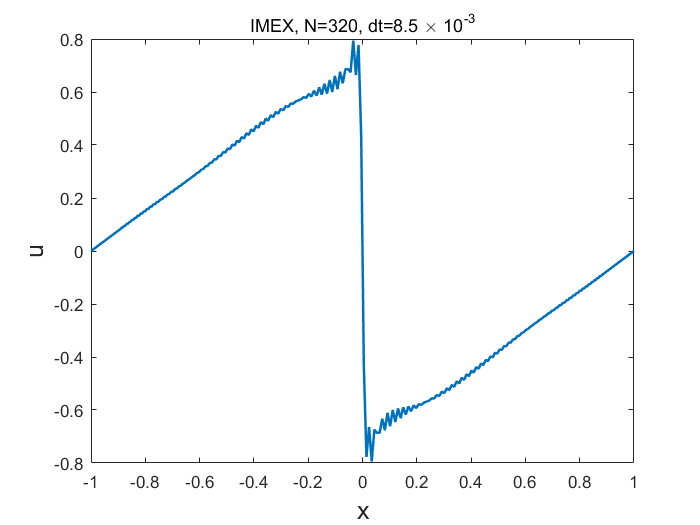}}
   \subfigure[SAV scheme]{ \includegraphics[scale=.35]{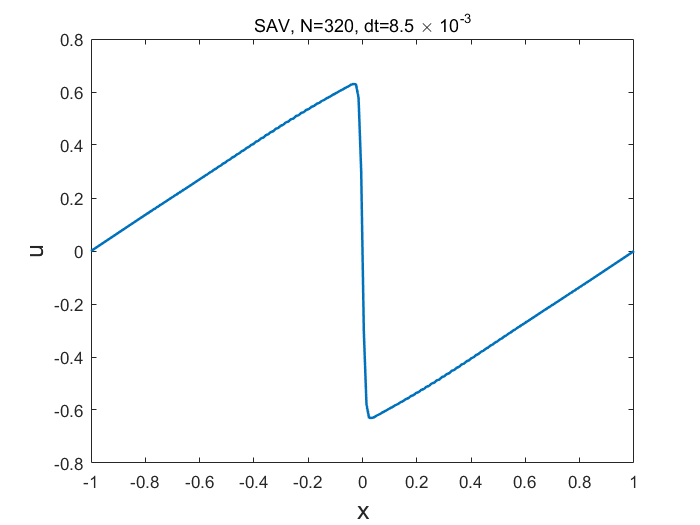}}\\
    \subfigure[reference solution]{ \includegraphics[scale=.35]{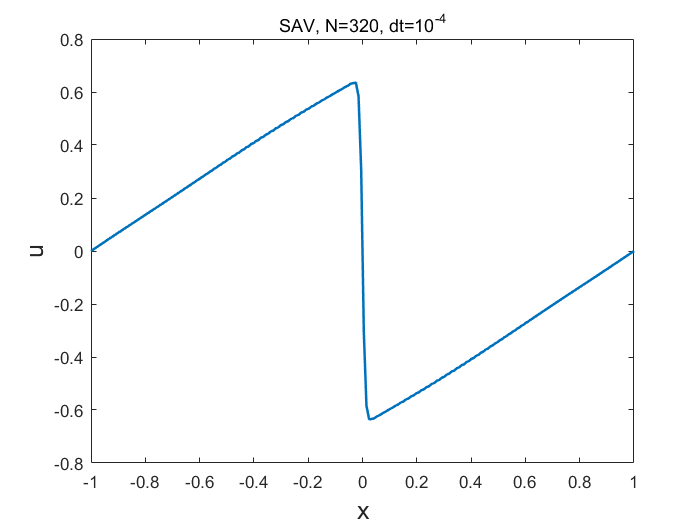}}
     \subfigure[ SAV factor ]{ \includegraphics[scale=.35]{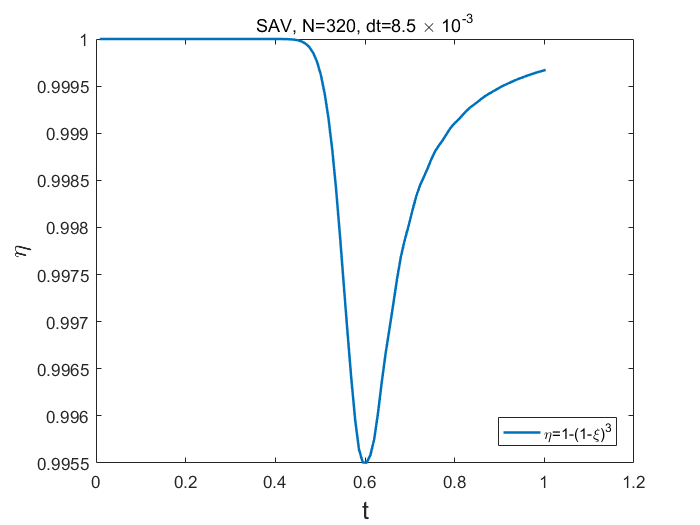}}
  \\
 \caption{\small   Burgers equation: a comparison of usual IMEX and SAV}
   \label{fig: BEtest2}
\end{center}
\end{figure}

\section{Error analysis for Allen-Cahn type equations}
While the  stability results in Theorem \ref{stableThm} are valid for general dissipative systems, it is cumbersome to carry out error analysis with such generality. So to simplify the presentation, we shall carry out error analysis for two class of typical dissipative equations: Allen-Cahn type equation in this section and Cahn-Hilliard type equation in the next section.

We first recall the following important result. Based on Dahlquist's G-stability theory,
Nevanlinna and Odeh  \cite{nevanlinna1981} proved the following results for BDF$k\;(1 \le k\le 5)$ schemes.

\begin{lemma}\label{lemmaODEH}
 For $1\le k\le 5$, there exist $0 \le \tau_k < 1$, a positive definite symmetric matrix $G=(g_{ij}) \in \mathcal{R}^{k,k}$ and real numbers $\delta_0,..., \delta_k$ such that
\begin{equation*}
\begin{split}
\Big(\alpha_k u^{n+1}-A_k(u^n), u^{n+1}-\tau_k u^n \Big)&=\sum_{i,j=1}^{k}g_{ij}(u^{n+1+i-k},u^{n+1+j-k})\\
&-\sum_{i,j=1}^{k}g_{ij}(u^{n+i-k},u^{n+j-k})
+\|\sum_{i=0}^{k} \delta_i u^{n+1+i-k}\|^2,
\end{split}
\end{equation*}
where the smallest possible values of $\tau_k$ are
\begin{equation*}
\tau_1=\tau_2=0,\quad \tau_3=0.0836,\quad \tau_4=0.2878,\quad \tau_5=0.8160,
\end{equation*}
and $\alpha_k$, $A_k$ are defined in \eqref{eq:bdf3}-\eqref{eq:bdf5}.
\end{lemma}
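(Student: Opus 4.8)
The plan is to treat this as a statement in Dahlquist's G-stability theory and to reduce the claimed Hilbert-space identity to a scalar positivity condition on the generating polynomials of the BDF$k$ method. First I would observe that the difference operator on the left is $\alpha_k u^{n+1}-A_k(u^n)=\sum_{j=0}^{k}c_j\,u^{n+1-j}$, whose generating polynomial in the backward-shift variable $\zeta$ is
\[
\delta(\zeta)=\sum_{j=0}^k c_j\zeta^j=\sum_{\ell=1}^{k}\frac1\ell(1-\zeta)^\ell,\qquad c_0=\alpha_k,
\]
and that testing against $u^{n+1}-\tau_k u^n$ corresponds to pairing with the multiplier polynomial $\mu(\zeta)=1-\tau_k\zeta$. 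Because the asserted identity is a quadratic form in the $u^{j}$ that must hold for arbitrary elements of the Hilbert space, polarization reduces it to the corresponding scalar identity; the lemma is then equivalent to the assertion that the trigonometric polynomial $\theta\mapsto\mathrm{Re}\big[\delta(e^{i\theta})\,\overline{\mu(e^{i\theta})}\big]$ is nonnegative for every $\theta$, i.e. that $\delta/\mu$ is positive-real on the unit circle.

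Next I would reconstruct $G$ and the $\delta_i$ from this positivity. Writing out the scalar version of the left-hand side and collecting terms, the expression splits into a part that telescopes as $\sum_{i,j}g_{ij}(\cdot)^{\,n+1}-\sum_{i,j}g_{ij}(\cdot)^{\,n}$ plus a residual quadratic form in $(u^{n+1-k},\dots,u^{n+1})$. That residual is precisely the nonnegative trigonometric polynomial above; by a Fejér--Riesz / spectral-factorization argument it can be written as a single perfect square $\big|\sum_{i=0}^{k}\delta_i\zeta^i\big|^2$, which yields the term $\|\sum_{i=0}^{k}\delta_i u^{n+1+i-k}\|^2$ after translating back to the Hilbert-space setting and fixes the real coefficients $\delta_i$. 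The symmetric matrix $G=(g_{ij})$ is read off from the telescoping part, and its positive definiteness follows from the strict positivity of $\delta/\mu$ away from $\zeta=1$, which is Dahlquist's characterization of G-stability in terms of A-stability.

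Finally I would split into cases according to $k$. For $k=1,2$ the BDF$k$ method is A-stable, hence already G-stable, so one may take $\tau_1=\tau_2=0$ and the nonnegativity of $\mathrm{Re}\,\delta(e^{i\theta})$ is the classical Dahlquist result. For $k=3,4,5$ the method is no longer A-stable, so a nonzero multiplier is unavoidable, and the task reduces to exhibiting admissible values $\tau_k\in[0,1)$ for which $\mathrm{Re}\big[\delta(e^{i\theta})\,\overline{(1-\tau_k e^{i\theta})}\big]\ge0$ for all $\theta$. I expect this verification to be the main obstacle: it amounts to bounding the minimum over $\theta$ of an explicit trigonometric polynomial whose coefficients depend on $\tau_k$, and for the near-critical cases $\tau_4\approx0.2878$ and especially $\tau_5\approx0.8160$ (close to the barrier $1$) the nonnegativity margin is extremely thin, so the estimate requires careful, partly numerically assisted analysis — precisely the delicate computation carried out by Nevanlinna and Odeh. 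Once these $\tau_k$ are certified, the factorization in the previous step delivers $G$ and the $\delta_i$ and completes the proof.
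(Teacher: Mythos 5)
First, a point of reference: the paper itself does not prove this lemma at all --- it is recalled verbatim from Nevanlinna and Odeh \cite{nevanlinna1981} and used as a black box in the subsequent error analysis, so there is no internal proof to compare against. Judged as a reconstruction of the cited argument, your outline is structurally faithful: expressing the BDF$k$ difference operator through its generating polynomial $\delta(\zeta)=\sum_{\ell=1}^{k}\frac1\ell(1-\zeta)^\ell$, pairing it with the multiplier $\mu(\zeta)=1-\tau_k\zeta$, reducing the Hilbert-space identity to the scalar one by bilinearity, invoking Dahlquist's G-stability lemma to convert existence of $(G,\delta_i)$ into positive-realness of $\delta/\mu$, producing the $\delta_i$ by Fej\'er--Riesz factorization, and disposing of $k=1,2$ via A-stability (indeed for $k=2$ the resulting identity is the classical one $(3a-4b+c,2a)=|a|^2-|b|^2+|2a-b|^2-|2b-c|^2+|a-2b+c|^2$). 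One technical caveat: obtaining a positive \emph{definite} $G$, rather than merely semidefinite, is exactly where Dahlquist's lemma needs its hypotheses (the two polynomials have no common divisor, and the harmonic extension of $\mathrm{Re}[\delta/\mu]$ is strictly positive off the boundary point $\zeta=1$); your one-sentence justification glosses this, though it is repairable by the minimum principle for harmonic functions.

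The genuine gap is that the proposal stops where the theorem's content starts. For $k=3,4,5$ the substance of the Nevanlinna--Odeh result is precisely the verification that the stated values $\tau_3=0.0836$, $\tau_4=0.2878$, $\tau_5=0.8160$ make $\mathrm{Re}\big[\delta(e^{i\theta})(1-\tau_k e^{-i\theta})\big]\ge 0$ for all $\theta$; you explicitly defer this computation, so what you have actually established is a reduction of the lemma to that inequality, not the lemma itself. Moreover, the statement asserts these are the \emph{smallest possible} multipliers, a minimality claim your outline never addresses (it would require showing the inequality fails for every smaller $\tau$). To turn the sketch into a proof you would either have to carry out the positivity check rigorously --- which is feasible, since after substituting $x=\cos\theta$ it becomes a low-degree polynomial inequality on $[-1,1]$ amenable to exact (e.g., Sturm-sequence or sum-of-squares) verification rather than ``numerically assisted'' estimation --- or do what the paper does and cite \cite{nevanlinna1981}, in which case no proof is owed at all.
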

The above result played a key role in proving the stability of high-order BDF schemes for nonlinear parabolic equations \cite{akrivis2015}, and it plays an important role in our error analysis.

We shall also frequently use the following discrete Gronwall Lemma (see for example, \cite{STWbook}, Lemma B.10).
\begin{lemma}
\textbf{(Discrete Gronwall Lemma)} Let $y^k,\,h^k,\,g^k,\,f^k$ be four nonnegative sequences satisfying
\begin{equation*}
y^n+\delta t \sum_{k=0}^{n}h^k \le B+\delta t \sum_{k=0}^{n}(g^ky^k+f^k)\; \text{ with }\; \delta t \sum_{k=0}^{T/\delta t} g^k \le M,\, \forall\, 0\le n \le T/\delta t.
\end{equation*}
We assume $\delta t\, g^k <1$ and let $\sigma=\max_{0\le k \le T/\delta t}(1-\delta t g^k)^{-1}$. Then
\begin{equation*}
y^n+\delta t \sum_{k=1}^{n}h^k \le \exp(\sigma M)(B+\delta t\sum_{k=0}^{n}f^k),\,\,\forall\, n\le T/\delta t.
\end{equation*}
\end{lemma}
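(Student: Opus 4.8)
The plan is to run a standard implicit-to-explicit discrete Gronwall argument. The only genuine obstacle is that the right-hand side of the hypothesis contains the term $\delta t\, g^n y^n$ at the \emph{current} index $n$, so the inequality does not immediately express $y^n$ through the data and earlier values alone. The first step is therefore to absorb this term: writing the hypothesis at a generic index $m\le n$ as $(1-\delta t\, g^m)y^m+\delta t\sum_{k=0}^m h^k\le \Phi^m+\delta t\sum_{k=0}^{m-1}g^k y^k$, where I abbreviate $\Phi^m:=B+\delta t\sum_{k=0}^m f^k$, and dividing by $1-\delta t\, g^m>0$ (legitimate since $\delta t\, g^m<1$), I obtain $y^m\le a_m\big(\Phi^m+\delta t\sum_{k=0}^{m-1}g^k y^k\big)$ with $a_m:=(1-\delta t\, g^m)^{-1}\le\sigma$, having discarded the nonnegative $h$-sum.

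Next I would set up a telescoping recursion for the cumulative history $S_m:=\delta t\sum_{k=0}^m g^k y^k$. Multiplying the previous bound by $\delta t\, g^m$ gives $\delta t\, g^m y^m\le \delta t\, q_m(\Phi^m+S_{m-1})$ with $q_m:=g^m a_m=g^m/(1-\delta t\, g^m)$, and since $\Phi^m\le\Phi^n$ for $m\le n$ (the $f^k$ being nonnegative), this yields $\Phi^n+S_m\le(1+\delta t\, q_m)(\Phi^n+S_{m-1})$. Iterating from the empty base $S_{-1}=0$ produces the product bound $\Phi^n+S_n\le\Phi^n\prod_{j=0}^n(1+\delta t\, q_j)$, and the elementary inequality $1+x\le e^x$ turns this into $\Phi^n+S_n\le\Phi^n\exp\!\big(\delta t\sum_{j=0}^n q_j\big)$.

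The final step is to control the exponent and recover the quantity in the conclusion. Using $q_j=a_j g^j\le\sigma g^j$ together with $\delta t\sum_j g^j\le M$ gives $\delta t\sum_{j=0}^n q_j\le\sigma M$, so $\Phi^n+S_n\le e^{\sigma M}\Phi^n$; this is precisely where the estimate collapses to $\exp(\sigma M)$ with \emph{no} spurious prefactor of $\sigma$, the point being that $\sigma$ enters only linearly inside the exponent. To close, I read the original hypothesis at $m=n$ directly as $y^n+\delta t\sum_{k=0}^n h^k\le\Phi^n+S_n$, and since $\delta t\sum_{k=1}^n h^k\le\delta t\sum_{k=0}^n h^k$ I conclude $y^n+\delta t\sum_{k=1}^n h^k\le\Phi^n+S_n\le e^{\sigma M}\Phi^n$, which is the claim. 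I expect the one delicate point to be exactly this bookkeeping that keeps $\Phi^n$ rather than $\sigma\Phi^n$ in front: it is secured by bounding $\Phi^n+S_m$ \emph{as a whole} in the telescoping, instead of bounding $y^m$ and then reinflating by the factor $a_m$.
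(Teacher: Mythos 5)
Your proof is correct. Note that the paper itself does not prove this lemma: it is quoted verbatim from the literature (Lemma B.10 of the cited book \cite{STWbook}), so there is no in-paper argument to compare against. Your argument is the standard one for this ``implicit'' form of the discrete Gronwall inequality, and every step checks out: absorbing the current-index term $\delta t\, g^m y^m$ using $\delta t\, g^m<1$, telescoping the combined quantity $\Phi^n+S_m$ so that the factor $a_m\le\sigma$ enters only through the exponent via $q_m\le\sigma g^m$ (which is exactly what yields $\exp(\sigma M)$ with no prefactor), and finally reading off the conclusion from the hypothesis at index $n$ together with the bound $\Phi^n+S_n\le e^{\sigma M}\Phi^n$. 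In fact you prove the marginally stronger statement with $\delta t\sum_{k=0}^{n}h^k$ on the left, from which the stated inequality with the sum starting at $k=1$ follows by nonnegativity of $h^0$.
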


Consider the Allen-Cahn type  equation:
\begin{equation}\label{eq: AC}
\frac{\partial u}{\partial t}-\Delta u+\lambda u-g(u)=0 \quad (\bm x,t)\in \Omega\times (0,T],
\end{equation}
where $\Omega$ is an open bounded domain in $\mathbb{R}^d\;(d=1,2,3)$, with the initial condition $u(x,0)=u^0(x)$, and   boundary condition:
\begin{equation}
periodic, 
\; or \; u|_{\partial\Omega}=0,\; or\;  \frac{\partial u}{\partial \bm n}|_{\partial\Omega}=0.
\end{equation}
The above equation is a special case of \eqref{dissE} with  $\mathcal{A}=-\Delta+\lambda I$, and satisfies the dissipation law \eqref{dissL} with $E(u)=\frac12(\mathcal{L}u,u)+(G(u),1)$ where $(\mathcal{L}u,u)=(\nabla u,\nabla u)+\lambda (u,u)$, $G(u)=\int^u g(v)dv$ and $\mathcal{K}(u)=( \frac{\delta E}{\delta u}, \frac{\delta E}{\delta u})$.
We assume, without loss of generality,
\begin{equation}\label{barc}
 \int_\Omega G(v) dx\ge \underbar{C} >0 \quad\forall v.
\end{equation}
In particular, with $g(u)=(1-u^2)u$ and $\lambda=0$, the above equation becomes the celebrated Allen-Cahn equation \cite{All.C79}.



We recall  the following regularity  result for  \eqref{eq: AC} (see, for instance,   \cite{temam2012}).
\begin{theorem}\label{Thm2.6}
Assume $u^0 \in H^2(\Omega)$ and the following holds
\begin{equation}\label{eq: 2.6}
|g'(x)|< C(|x|^p+1),\quad  p>0 \,\,  arbitrary \quad  \,if \,  \,d=1,2;\quad  \, 0<p<4 \quad if \, d=3.
\end{equation}
Then for any $T>0$, the problem \eqref{eq: AC} has a unique solution in the space
\begin{equation*}
C([0,T];H^2(\Omega)) \cap L^2(0,T; H^3(\Omega)).
\end{equation*}
\end{theorem}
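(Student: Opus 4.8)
The statement is a standard well-posedness/regularity result for a semilinear parabolic equation, so the plan is the classical combination of a Galerkin approximation, a hierarchy of a priori estimates, passage to the limit, and a Gronwall argument for uniqueness. First I would fix an orthonormal basis $\{w_j\}$ of $L^2(\Omega)$ consisting of eigenfunctions of $-\Delta$ subject to the prescribed boundary condition, seek the approximations $u_m(t)=\sum_{j=1}^m c_j^m(t)w_j$ solving the finite-dimensional projection of \eqref{eq: AC}, and invoke Carath\'eodory's theorem for local-in-time existence of $c^m(t)$. All bounds below are derived for $u_m$ uniformly in $m$, which also promotes the local approximations to global ones.

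The core of the argument is a two-level energy estimate. The basic level comes from testing the projected equation with $\partial_t u_m$ and using the dissipative energy law \eqref{dissL}, whose nonlinear (potential) part is bounded below by \eqref{barc}, rendering the energy coercive on $H^1$; this yields
\[
\int_0^T\|\partial_t u_m\|^2\,dt+\sup_{0\le t\le T}\|u_m(t)\|_{H^1}^2\le C,
\]
so $u_m$ is bounded in $L^\infty(0,T;H^1)$ and $\partial_t u_m$ in $L^2(0,T;L^2)$. To reach $H^2$ I would differentiate \eqref{eq: AC} in time, set $v_m=\partial_t u_m$, and test the resulting equation $\partial_t v_m-\Delta v_m+\lambda v_m-g'(u_m)v_m=0$ with $v_m$, obtaining
\[
\frac12\frac{d}{dt}\|v_m\|^2+\|\nabla v_m\|^2+\lambda\|v_m\|^2=(g'(u_m)v_m,v_m).
\]
The right-hand side is controlled using the growth condition \eqref{eq: 2.6}: writing $(g'(u_m)v_m,v_m)\le C\int(|u_m|^p+1)|v_m|^2$ and estimating $\int|u_m|^p|v_m|^2$ by H\"older's inequality with the embedding $H^1\hookrightarrow L^6$ for $u_m$ and interpolation of $\|v_m\|_{L^q}$ between $L^2$ and $L^6$, this term is absorbed as $\varepsilon\|\nabla v_m\|^2+C_\varepsilon\|v_m\|^2$. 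Gronwall's lemma, with $\|\partial_t u_m(0)\|$ bounded since $u^0\in H^2$ forces $\Delta u^0-\lambda u^0+g(u^0)\in L^2$, then gives $\partial_t u_m\in L^\infty(0,T;L^2)\cap L^2(0,T;H^1)$ uniformly in $m$.

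With these bounds I would bootstrap by elliptic regularity. Rewriting \eqref{eq: AC} as $-\Delta u_m=-\partial_t u_m-\lambda u_m+g(u_m)$ with right-hand side in $L^\infty(0,T;L^2)$ gives a uniform $L^\infty(0,T;H^2)$ bound; since $\nabla g(u_m)=g'(u_m)\nabla u_m$ lies in $L^2(0,T;L^2)$ by the growth condition and the $H^2$ bound, a second application gives $L^2(0,T;H^3)$. Passing to the limit $m\to\infty$ along a subsequence, with Aubin--Lions compactness providing the strong convergence needed to identify $g(u)$, produces a weak solution with the claimed regularity, and the continuity $u\in C([0,T];H^2)$ follows from $u\in L^2(0,T;H^3)$ and $\partial_t u\in L^2(0,T;H^1)$ via the interpolation $[H^3,H^1]_{1/2}=H^2$. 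For uniqueness I would take two solutions $u_1,u_2$, test the equation for $w=u_1-u_2$ with $w$, and use $g(u_1)-g(u_2)=g'(\xi)w$; because $H^2\hookrightarrow L^\infty$ for $d\le 3$ the solutions are uniformly bounded, so $|g'(\xi)|\le C$ and $(g(u_1)-g(u_2),w)\le C\|w\|^2$, whence Gronwall forces $w\equiv 0$.

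The main obstacle is the time-differentiated estimate: making $(g'(u_m)v_m,v_m)$ absorbable is exactly where the growth hypothesis \eqref{eq: 2.6} is consumed, and the three-dimensional threshold $p<4$ is dictated by the Sobolev exponent of $H^1\hookrightarrow L^6$ (the exponent bookkeeping in the H\"older/interpolation step closes precisely for $p<4$ when $d=3$, and for arbitrary $p$ when $d=1,2$). Everything else is routine parabolic bootstrapping and interpolation.
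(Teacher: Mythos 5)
The first thing to note is that the paper contains no proof of Theorem \ref{Thm2.6}: it is recalled as a known result with a citation to \cite{temam2012}, so your argument must stand on its own as a reconstruction of the classical proof. Your skeleton is indeed the standard one (Galerkin approximation in a Laplacian eigenbasis, basic energy estimate, time-differentiated estimate for $v_m=\partial_t u_m$, compactness, uniqueness from $H^2\hookrightarrow L^\infty$ plus Gronwall), and your exponent bookkeeping in the step you identify as critical is correct: with $\|u_m\|_{L^6}\le C$, H\"older gives $\int |u_m|^p|v_m|^2\le \|u_m\|_{L^6}^p\|v_m\|_{L^{12/(6-p)}}^2$, and $12/(6-p)<6$ precisely when $p<4$, so interpolation ($\theta=p/4<1$) and Young absorb this term.

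However, there is a genuine gap in the elliptic bootstrap. You assert that the right-hand side of $-\Delta u_m=-\partial_t u_m-\lambda u_m+g(u_m)$ lies in $L^\infty(0,T;L^2)$, but this requires $g(u_m)\in L^\infty(0,T;L^2)$, i.e. $u_m\in L^\infty(0,T;L^{2(p+1)})$, since $|g(x)|\le C(|x|^{p+1}+1)$. For $d=3$ the uniform $H^1\hookrightarrow L^6$ bound gives this only for $p\le 2$, whereas the theorem allows $p<4$; for $2<p<4$ you only get $g(u_m)\in L^\infty(0,T;L^{6/(p+1)})$, so $H^2$ does not follow in one shot. One can iterate ($u\in L^q\Rightarrow g(u)\in L^{q/(p+1)}\Rightarrow u\in W^{2,q/(p+1)}\hookrightarrow L^{q'}$, which does terminate after finitely many steps for every $p<4$), but (i) you did not perform this iteration, and (ii) it is problematic at the Galerkin level, because the equation for $u_m$ carries the projection $P_m$ in front of $g(u_m)$, and $P_m$ is an $L^2$-orthogonal projection that is not uniformly bounded on $L^r$ for $r\ne 2$, so $W^{2,r}$ elliptic regularity cannot be applied to $u_m$ as written. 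The clean repair — which also matches the machinery this paper actually has on hand — is to avoid the $L^r$ bootstrap entirely: test the Galerkin equation with $\Delta^2 u_m$ (legitimate in the eigenbasis), integrate by parts to obtain
\begin{equation*}
\frac12\frac{d}{dt}\|\Delta u_m\|^2+\|\nabla\Delta u_m\|^2+\lambda\|\Delta u_m\|^2\le \|\nabla g(u_m)\|\,\|\nabla\Delta u_m\|,
\end{equation*}
and invoke the inequality of Lemma \ref{lemma} (Lemma 2.3 of \cite{SX18}), $\|\nabla g(u)\|^2\le C(M)(1+\|\nabla\Delta u\|^{2\sigma})$ with $\sigma<1$ under the uniform $H^1$ bound, so that Young's inequality absorbs the right-hand side and yields the uniform $L^\infty(0,T;H^2)\cap L^2(0,T;H^3)$ bound in one stroke for the whole range $p<4$; this is precisely the manipulation the paper performs in Step 1 of the proof of Theorem \ref{ThmAC}. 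Alternatively, run your bootstrap after passing to the limit, on the exact solution, where no projection intervenes. The remaining steps of your proposal (identification of the nonlinear limit via Aubin--Lions, continuity in $H^2$ by interpolation, and the uniqueness argument) are fine.
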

We also recall a  result (see Lemma 2.3 in \cite{SX18}) which is useful to deal with the nonlinear term in \eqref{eq: AC}.
\begin{lemma}\label{lemma}
Assume that $\|u\|_{H^1} \le M$ and \eqref{eq: 2.6} holds. Then for any $u \in H^3$, there exist $0 \le \sigma <1$ and a constant $C(M)$ such that the following inequality holds:
\begin{equation*}
\|\nabla g(u)\|^2 \le C(M)(1+\|\nabla \Delta u\|^{2\sigma}).
\end{equation*}
\end{lemma}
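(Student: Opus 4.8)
The plan is to reduce everything to the pointwise identity $\nabla g(u)=g'(u)\nabla u$, so that the growth condition \eqref{eq: 2.6} gives
\[
\|\nabla g(u)\|^2=\int_\Omega |g'(u)|^2|\nabla u|^2\,dx\le C\int_\Omega(|u|^{2p}+1)|\nabla u|^2\,dx .
\]
The contribution of the constant is immediately bounded by $C\|\nabla u\|^2\le CM^2$, so the task is to control $\int_\Omega |u|^{2p}|\nabla u|^2\,dx$. I would split this by H\"older's inequality as $\|\nabla g(u)\|\le C(\|g'(u)\|_{L^{r_1}}\|\nabla u\|_{L^{r_2}}+M)$ with $\frac1{r_1}+\frac1{r_2}=\frac12$, the whole point being to distribute integrability between the nonlinearity and the gradient. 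The three-dimensional case is the binding one, and there the efficient choice is $r_1=3,\ r_2=6$.

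For the gradient factor I would use the Sobolev embedding $H^1\hookrightarrow L^6$ (valid for $d=3$) to get $\|\nabla u\|_{L^6}\le C\|\nabla u\|_{H^1}\le C\|u\|_{H^2}$, then interpolate $\|u\|_{H^2}\le C\|u\|_{H^1}^{1/2}\|u\|_{H^3}^{1/2}$ and invoke the elliptic bound $\|u\|_{H^3}\le C(\|\nabla\Delta u\|+\|u\|_{H^1})$ (obtained by absorbing the intermediate-order seminorm through interpolation); since $\|u\|_{H^1}\le M$ this yields $\|\nabla u\|_{L^6}\le C(M)(1+\|\nabla\Delta u\|^{1/2})$. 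For the nonlinear factor I would bound $\|g'(u)\|_{L^3}\le C(\|u\|_{L^{3p}}^{p}+1)$ and embed $H^s\hookrightarrow L^{3p}$ with the critical order $s=\frac32-\frac1p$; interpolating $\|u\|_{H^s}\le C\|u\|_{H^1}^{1-\beta}\|u\|_{H^3}^{\beta}$ with $\beta=\frac{s-1}{2}=\frac14-\frac1{2p}$ (and $\beta=0$ when $p\le2$, where plain $H^1$ suffices) and reusing the elliptic bound gives $\|g'(u)\|_{L^3}\le C(M)(1+\|\nabla\Delta u\|^{p\beta})$ with $p\beta=\frac p4-\frac12$.

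Combining the two estimates, the total exponent of $\|\nabla\Delta u\|$ in $\|\nabla g(u)\|^2\le \|g'(u)\|_{L^3}^2\|\nabla u\|_{L^6}^2$ is $2p\beta+1=\frac p2$, so after Young's inequality collapses the mixed lower-order powers into $1+\|\nabla\Delta u\|^{p/2}$ we obtain the claim with $\sigma=p/4$. This is strictly less than $1$ precisely when $p<4$, which is exactly the hypothesis of \eqref{eq: 2.6} in dimension three; for $d=1,2$ the embeddings $H^1\hookrightarrow L^q$ hold for every finite $q$ (indeed $H^1\hookrightarrow L^\infty$ when $d=1$), so the same scheme works for arbitrary $p>0$ and produces an even smaller $\sigma$.

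The genuinely delicate point is not any individual inequality but the exponent bookkeeping: one must select the H\"older split $(r_1,r_2)$ and the intermediate Sobolev orders so that simultaneously (i) every factor that is to be absorbed into $C(M)$ really lies within the admissible range of the embedding out of $H^1$, and (ii) the accumulated power of the top-order quantity $\|\nabla\Delta u\|$ stays strictly below $2$. These two requirements pull against each other, and the choice $r_1=3,\ r_2=6$ is what balances them in $d=3$; the resulting value $\sigma=p/4$ shows that the restriction $p<4$ is the exact borderline for this argument.
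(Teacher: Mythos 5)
Your argument is correct, but note first that the paper itself contains no proof of this lemma: it is imported verbatim from Lemma 2.3 of \cite{SX18}, so the only meaningful comparison is with the standard argument behind that citation, and yours is essentially it. The chain rule $\nabla g(u)=g'(u)\nabla u$, the H\"older split $\|g'(u)\nabla u\|\le \|g'(u)\|_{L^3}\|\nabla u\|_{L^6}$, Sobolev embedding combined with $H^1$--$H^3$ interpolation on each factor, and a final application of Young's inequality is precisely the expected route; your exponent bookkeeping is accurate ($\beta=\tfrac14-\tfrac1{2p}$ for $2\le p<4$, total exponent $2p\beta+1=p/2$, hence $\sigma=p/4<1$ exactly when $p<4$ in $d=3$, with $\sigma\le\tfrac12$ and no restriction on $p$ in $d=1,2$), and it correctly identifies $p<4$ as the borderline that the hypothesis \eqref{eq: 2.6} encodes.

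One caveat should be made explicit. The inequality $\|u\|_{H^3}\le C(\|\nabla\Delta u\|+\|u\|_{H^1})$ on which both of your factor estimates rest is \emph{not} an interpolation fact, and it is false for a general $u\in H^3(\Omega)$: for harmonic $u$ the right-hand side degenerates to $C\|u\|_{H^1}$, which cannot control $\|u\|_{H^3}$ on a bounded domain. What is true is the elliptic-regularity statement: under the boundary conditions of the problem (periodic, homogeneous Dirichlet or Neumann) and for smooth $\partial\Omega$, one has $\|u\|_{H^3}\le C(\|\Delta u\|_{H^1}+\|u\|)$, after which $\|\Delta u\|$ is absorbed by interpolation as you indicate. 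Since the lemma is only ever applied to iterates of the scheme, which do satisfy these boundary conditions (and since the cited source works in the same setting), this does not invalidate your proof; but the appeal to boundary conditions and elliptic regularity should be stated rather than folded into a parenthetical, because the lemma as written says ``for any $u\in H^3$'' and is, in that unqualified generality, not provable by your (or the standard) route.
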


We denote hereafter
$$t^n=n\,\delta t,\,\bar{e}^n=\bar{u}^{n}- u(\cdot,t^n),\, e^n=u^n- u(\cdot,t^n),\, s^n=r^n-r(t^n).$$

In the following, we carry out a unified error analysis for the first- to fifth- order SAV schemes described as in \eqref{eq: Nsav} with the coefficients defined in \eqref{eq:bdf3} - \eqref{eq:bdf5}.

 For \eqref{eq: AC}, the $k$th-order version of
 \eqref{eq: Nsav1} and \eqref{Nsav4} read:
 \begin{equation}\label{eq: ubark}
\frac{\alpha_k \bar{u}^{n+1}-A_k(u^n)}{ \delta t}=\Delta \bar{u}^{n+1}-\lambda \bar{u}^{n+1}+g[B_k(\bar{u}^n)],
\end{equation}
\begin{equation}
\frac{\alpha_k u^{n+1}-\eta^{n+1}_{k}A_k(u^n)}{ \delta t}=\Delta u^{n+1}-\lambda u^{n+1}+\eta_k^{n+1}g[B_k(\bar{u}^n)],\label{eq: Nsavk}
\end{equation}
where $\alpha_k$, $A_k$, $B_k$ defined in \eqref{eq:bdf3} - \eqref{eq:bdf5}.

\begin{theorem}\label{ThmAC} Given initial condition $\bar{u}^0=u^0=u(0)$,  $r^0=E[u^0]$.
Let $\bar u^{n+1}$ and $u^{n+1}$ be computed with the $k$th order scheme  \eqref{eq: Nsav1}- \eqref{eq: Nsav2} $(1 \le k \le5)$ for \eqref{eq: AC} with
\begin{equation*}
 \eta_1^{n+1}=1-(1-\xi^{n+1})^{3}, \quad \eta_k^{n+1}=1-(1-\xi^{n+1})^{k+1}\; (k=2,3,4,5).
\end{equation*}
We assume \eqref{eq: 2.6} holds and
\begin{equation*}
u^0\, \in H^3,\; \frac{\partial ^j u}{\partial t^j} \in L^2(0,T;H^1) \,\, 1\le j \le k+1.
\end{equation*}
Then for $n+1 \le T/\delta t$ and $\delta t< \min\{\frac{1}{1+2C_0^{k+2}},\frac{1-\tau_k}{3k}\}$, we have
\begin{equation*}
\|\bar e^{n+1}\|_{H^2},\; \|e^{n+1}\|_{H^2}\le C \delta t^{k},
\end{equation*}
where the constants $C_0$, $C$ are dependent on $T,\, \Omega,$ the $k\times k$ matrix $G=(g_{ij})$ in Lemma \ref{lemmaODEH} and the exact solution $u$ but are independent of $\delta t$ and $0 \le \tau_k<1 $ is the constant in Lemma \ref{lemmaODEH}.
\end{theorem}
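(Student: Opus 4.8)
The plan is to argue by induction on $n$, the driving hypothesis being the first-order closeness of the SAV factor,
\begin{equation*}
|1-\xi^q| \le C_0\,\delta t, \quad \forall\, q \le n,
\end{equation*}
which is equivalent to $|1-\eta_k^q| = |1-\xi^q|^{k+1} \le C_0^{k+1}\delta t^{k+1}$ and therefore controls the (higher-order) discrepancy between $u^q=\eta_k^q\bar u^q$ and $\bar u^q$. The base case is supplied by the accurate initialization. The real content is to show that under this hypothesis the $H^2$ error bound holds at step $n+1$ \emph{and} that $|1-\xi^{n+1}|\le C_0\delta t$, thereby closing the induction. My first task would be to upgrade the stability bound to $H^2$: Theorem \ref{stableThm} already gives the unconditional bound $\|u^q\|_{H^1}\le M_k$, and since $|1-\eta_k^q|=O(\delta t^{k+1})$ under the hypothesis, the same bound passes to $\bar u^q$ up to a constant. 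To reach $H^2$ I would test \eqref{eq: Nsavk} with $\Delta^2 u^q$, use Lemma \ref{lemmaODEH} to produce a telescoping $G$-norm of $\{\Delta u^{q}\}$ on the left, and control the nonlinearity through Lemma \ref{lemma}: because $\|\nabla g(B_k(\bar u))\|^2\le C(M)(1+\|\nabla\Delta \bar u\|^{2\sigma})$ with $\sigma<1$, Young's inequality absorbs it into the dissipative term $\|\nabla\Delta u^{q}\|^2$. Summation then yields $\|u^q\|_{H^2},\,\|\bar u^q\|_{H^2}\le C$ for $q\le n$, and via $H^2\hookrightarrow L^\infty$ this also bounds $g^{(i)}(\bar u^q)$ in $L^\infty$.

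Next I would set up the error equation for $\bar e^{n+1}$ from \eqref{eq: ubark}. Substituting $u^{n-j}=\eta_k^{n-j}\bar u^{n-j}$ inside $A_k(u^n)$ produces, besides the genuine error $A_k(\bar e^n)$, perturbation terms $(\eta_k^{n-j}-1)\bar u^{n-j}=O(\delta t^{k+1})$, together with the BDF$k$ consistency defect $R_1^n=O(\delta t^{k+1})$ and the nonlinear/extrapolation error $R_2^n$. I would then test this equation with the Nevanlinna--Odeh multiplier $\bar e^{n+1}-\tau_k\bar e^n$ and with its $-\Delta$ and $\Delta^2$ counterparts, so that at each of the three levels the left-hand side is handled by the identity of Lemma \ref{lemmaODEH}, giving telescoping $G$-norms plus a nonnegative square. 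The delicate point is the diffusion contribution $(\mathcal{A}\bar e^{n+1},\bar e^{n+1}-\tau_k\bar e^n)$: using $\tau_k<1$ together with Young's inequality it remains coercive, bounded below by a positive multiple of $\|\nabla\Delta\bar e^{n+1}\|^2$ minus a $\bar e^n$ contribution that telescopes upon summation, and it is exactly here that the restriction $\delta t<\frac{1-\tau_k}{3k}$ is consumed. After bounding the right-hand side (the $\eta$-perturbations contribute $O(\delta t^{2k+1})$, and $\tfrac1{\delta t}\|R_1^n\|^2,\ \tfrac1{\delta t}\|\nabla R_1^n\|^2$ contribute $O(\delta t^{2k+1})$ locally), the discrete Gronwall lemma gives
\begin{equation*}
\|\bar e^{n+1}\|_{H^2}\le C\big(1+C_0^{k+1}\big)\,\delta t^{k}.
\end{equation*}

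Finally I would close the induction by estimating $1-\xi^{n+1}$. Since $r(t^{n+1})=E(u(t^{n+1}))$ exactly, one writes $1-\xi^{n+1}=\big([E(\bar u^{n+1})-E(u(t^{n+1}))]+s^{n+1}\big)/E(\bar u^{n+1})$ with $s^{n+1}=r^{n+1}-r(t^{n+1})$. The energy difference is controlled by $\|\bar e^{n+1}\|_{H^1}\le C\delta t^k$ via Step~1, while $s^{n+1}$ is treated through its own evolution \eqref{eq: Nsav3}: summing the $r$-error relation, bounding the first-order truncation $T_1^n=\int_{t^n}^{t^{n+1}}(s-t^n)r_{tt}(s)\,ds$ and the differences of $\|\tfrac{\delta E}{\delta u}\|^2$ (which require the $H^2$ error bound just obtained), and invoking discrete Gronwall yields $|s^{n+1}|\le C\delta t$. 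Collecting these gives $|1-\xi^{n+1}|\le C_4\,\delta t\big(\sqrt{1+C_0^{k+1}}\,\delta t+1\big)$; choosing $C_0=2C_4$ and imposing $\delta t<\frac{1}{1+2C_0^{k+2}}$ forces the right-hand side below $C_0\delta t$, which closes the induction. The bound for $e^{n+1}$ then follows from $e^{n+1}=\eta_k^{n+1}\bar e^{n+1}+(\eta_k^{n+1}-1)u(t^{n+1})$ together with $|\eta_k^{n+1}-1|\le C_0^{k+1}\delta t^{k+1}$.

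I expect the main obstacle to be Step~2 for $k\ge3$, where no one-line energy identity is available: one must rely on the $G$-matrix of Lemma \ref{lemmaODEH} and test simultaneously at the $L^2$, $H^1$ and $H^2$ levels, keeping the diffusion term coercive against the $-\tau_k\bar e^n$ cross terms, while carefully tracking the interlocking constants $C_0,\dots,C_4$ so that the two step-size conditions genuinely close the induction rather than merely bound a single step.
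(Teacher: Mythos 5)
Your proposal follows the same route as the paper's own proof: the identical induction hypothesis $|1-\xi^q|\le C_0\,\delta t$, the same Step 1 (uniform $H^2$ bound obtained by testing \eqref{eq: Nsavk} with the Nevanlinna--Odeh multiplier and absorbing the nonlinearity via Lemma \ref{lemma}), the same Step 2 (testing the error equation from \eqref{eq: ubark} with $\bar e^{n+1}-\tau_k\bar e^n$ and its $-\Delta$ and $\Delta^2$ counterparts, then discrete Gronwall), and the same closure through the scalar error $s^{n+1}=r^{n+1}-r(t^{n+1})$. For $k=2,3,4,5$ your sketch is, up to bookkeeping of the exponents of $C_0$ in the intermediate bounds, exactly the paper's argument.

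However, there is a genuine gap at $k=1$. You quote the theorem's hypothesis $\eta_1^{n+1}=1-(1-\xi^{n+1})^{3}$, but your argument then uses $|1-\eta_k^q|=|1-\xi^q|^{k+1}$ uniformly, i.e.\ the \emph{square} for $k=1$, and your closure bound $|1-\xi^{n+1}|\le C_4\,\delta t\,(\sqrt{1+C_0^{k+1}}\,\delta t+1)$ silently attaches a factor $\delta t$ to the $C_0$-dependent term for every $k$. That factor is not there when $k=1$ and the power is $2$. Indeed, with $|1-\eta_1^q|\le C_0^2\delta t^2$ the perturbation terms $(\eta_1^q-1)\bar u^q$ contribute $C\,C_0^4\delta t^{3}$ per step, hence $\|\bar e^{n+1}\|_{H^2}\le C\sqrt{1+C_0^4}\,\delta t$ after summation and Gronwall, and then $\delta t\sum_q\|\bar e^{q+1}\|_{H^2}\le CT\sqrt{1+C_0^4}\,\delta t$ in the $s$-estimate carries \emph{no} extra power of $\delta t$; the closure inequality becomes $C_4(\sqrt{1+C_0^4}+1)\le C_0$, whose left side grows like $C_4C_0^2$, so no admissible $C_0$ exists (one would need simultaneously $C_0>2C_4$ and $C_0<1/C_4$, impossible since $C_4\ge 1$). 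The correctly derivable bound in general is $C_4\,\delta t\,(\sqrt{1+C_0^{2k+2}}\,\delta t^{k-1}+1)$, cf.\ \eqref{eq: C_0}: the factor $\delta t^{k-1}$ is precisely what makes the induction close for $k\ge2$ and what fails at $k=1$. This is why the paper prescribes the cube for the first-order scheme (see also Remark \ref{remeta}): it upgrades the perturbation to $O(\delta t^3)$, giving $\|\bar e^{n+1}\|_{H^2}\le C\sqrt{1+C_0^6\delta t^2}\,\delta t$ and the solvable closure condition \eqref{C_01st}. Your unified treatment must be amended accordingly for $k=1$. A minor slip in the same spirit: in Step 1 you test with $\Delta^2u^q$ alone, whereas for $k\ge3$ (where $\tau_k>0$) the $G$-norm telescoping of Lemma \ref{lemmaODEH} requires the multiplier $\Delta^2u^q-\tau_k\Delta^2u^{q-1}$, which you do use correctly in Step 2.
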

\begin{proof} We assume that $\bar u^i$ and $u^i$ $(i=1,\cdots,k-1)$ are computed with a proper initialization procedure such that $\|\bar u^i-u(t_i)\|_{H^2}=O(\delta t^k)$ and $\| u^i-u(t_i)\|_{H^2}=O(\delta t^k)$ $(i=1,\cdots,k-1)$. To simplify the presentation, we set
 $\bar{u}^i=u^i=u(t_i)$ and $r^i=E_1[u^i]$ for $i=1,\cdots,k-1$.

The main task  is to prove
\begin{equation}\label{eq: prestepsk}
|1-\xi^q| \le C_0\,\delta t,\,\, \forall  q\le T/{\delta t}.
\end{equation}
where  the constant $C_0$ is dependent on $T,\, \Omega$ and the exact solution $u$ but is independent of $\delta t$, and  will be defined in the proof process. Below we shall prove
\eqref{eq: prestepsk} by induction.

Under the assumption, \eqref{eq: prestepsk}  certainly holds for $q=0$. Now
suppose we have
\begin{equation}\label{eq: prestepsk2}
|1-\xi^q| \le C_0\,\delta t,\,\, \forall q \le m,
\end{equation}
we shall prove below
\begin{equation}\label{eq: xik}
|1-\xi^{m+1}| \le C_0 \delta t.
\end{equation}
We shall first consider $k=2,3,4,5$, and point out the necessary modifications for the case $k=1$ later.

\textbf{Step 1: $H^2$ bound for $u^{n}$ and $\bar{u}^{n}$ for all $n\le m$.} For the $k$th-order schemes, it follows from Theorem \ref{stableThm} that
\begin{equation}\label{eq: H1boundHigh}
\|u^q\|_{H^1} \le M_k,\,\forall q\le T/{\delta t}.
\end{equation}
Under assumption \eqref{eq: prestepsk2}, if we choose $\delta t$ small enough such that
\begin{equation}\label{eq: dtcond1}
 \delta t \le \text{min}\{\frac{1}{2C_0^{k+1}},1\},
\end{equation}
 we have
\begin{equation}\label{eq: erroretaHigh}
1-\frac{\delta t^k}{2} \le |\eta_k^q| \le 1+ \frac{\delta t^k}{2},\,\,|1-\eta_k^q| \le \frac{\delta t^k}{2}, \forall q\le m,
\end{equation}
and
\begin{equation}\label{eq: ubarH1boundHigh}
\|\bar{u}^q\|_{H^1} \le 2 M_k,\,\forall q\le m,\,\forall \delta t\le 1.
\end{equation}
Consider \eqref{eq: Nsavk} in step $q$:
\begin{equation}\label{eq: Nsavkn}
\frac{\alpha_k u^{q}-\eta^{q}_{k}A_k(u^{q-1})}{ \delta t}=\Delta u^{q}-\lambda u^{q}+\eta_k^{q}g[B_k(\bar{u}^{q-1})].
\end{equation}
Thanks to Lemma \ref{lemma} and \eqref{eq: erroretaHigh}, we have
\begin{equation*}
\begin{split}
\|\nabla g[B_k(\bar{u}^{q-1})]\|^2 & \le C(M_k)(\|\nabla \Delta B_k(\bar{u}^{q-1})\|^{2\sigma}+1)\\
& \le \gamma_k \|\nabla \Delta B_k(\bar{u}^{q-1})\|^{2}+C(M_k,\gamma_k)\\
&  \le \gamma_k \|\nabla \Delta B_k(\frac{1}{\eta_k^{q-1}}{u}^{q-1})\|^{2}+C(M_k,\gamma_k)\\
& \le 4 \gamma_k \sum_{i=1}^{k} \| \nabla \Delta u^{q-i}\|^2+C(M_k,\gamma_k),
\end{split}
\end{equation*}
where $\gamma_k$ can be any positive constant.
Taking the inner product of  \eqref{eq: Nsavkn} with $\Delta^2 {u}^{q}-\tau_k \Delta^2 {u}^{q-1}$ and using the above inequality, it follows from Lemma \ref{lemmaODEH} that there exist $0 \le \tau_k < 1$, a positive definite symmetric matrix $G=(g_{ij}) \in \mathcal{R}^{k,k}$ and  $\delta_0,..., \delta_k$  that
\begin{small}
\begin{equation}\label{eq: 2.15}
\begin{split}
\frac{1}{\delta t} & \Big (\sum_{i,j=1}^{k}g_{ij}(\Delta{u}^{q+i-k},\Delta {u}^{q+j-k}) -\sum_{i,j=1}^{k}g_{ij}(\Delta {u}^{q-1+i-k},\Delta {u}^{q-1+j-k})+\|\sum_{i=0}^{k} \delta_i \Delta {u}^{q+i-k}\|^2 \Big)\\
& +\frac{1}{2}\|\nabla \Delta {u}^{q}\|^2+\frac{\lambda}{2}\|\Delta {u}^{q}\|^2 \\
& \le \eta_k^q \big ( g[B_k(\bar{u}^{q-1})], \Delta ^2 {u}^{q}-\tau_k\Delta^2 {u}^{q-1} \big )+\frac{\tau_k}{2}\|\nabla \Delta {u}^{q-1}\|^2+\frac{\lambda \tau_k}{2}\|\Delta {u}^{q-1}\|^2\\
& +\frac{(\eta_k^q-1)}{\delta t}\big(A_k(u^{q-1}),\Delta^2 {u}^{q}-\tau_k \Delta^2 {u}^{q-1} \big) \\
& \le C(\epsilon_k)|\eta_k^q| \|\nabla g[B_k(\bar{u}^{q-1})]\|^2 +\epsilon_k|\eta_k^q| \big(\|\nabla \Delta {u}^{q}\|^2+\|\nabla \Delta {u}^{q-1}\|^2 \big)+\frac{\tau_k}{2}\|\nabla \Delta {u}^{q-1}\|^2\\
&+\frac{\lambda \tau_k}{2}\|\Delta {u}^{q-1}\|^2
 +\frac{|1-\eta_k^q|}{\delta t}\|\nabla A_k(u^{q-1})\|^2+ \frac{|1-\eta_k^q|}{\delta t} \big(\|\nabla \Delta u^q\|^2+ \|\nabla \Delta u^{q-1}\|^2\big)\\
& \le C(M_k,\,\epsilon_k,\,\gamma_k)+ \big(4C(\epsilon_k)|\eta_k^q|\gamma_k+\epsilon_k|\eta_k^q|+\frac{|1-\eta_k^q|}{\delta t}\big)\sum_{i=1}^{k}\|\nabla \Delta {u}^{q-i}\|^2 \\
& +\frac{\tau_k}{2}\|\nabla \Delta {u}^{q-1}\|^2+\frac{\lambda \tau_k}{2}\|\Delta {u}^{q-1}\|^2+\frac{|1-\eta_k^q|}{\delta t}\|\nabla A_k(u^{q-1})\|^2,
\end{split}
\end{equation}
\end{small}
where $\epsilon_k$ can be any positive constant.
Note that $\tau_k<1$, we can choose $\delta t$,  $\epsilon_k$ and $\gamma_k$  small enough such that
\begin{equation}\label{cond5}
\delta t < \frac{1-\tau_k}{3k},\quad \epsilon_k<\frac{1-\tau_k}{12k},\quad \gamma_k<\frac{1-\tau_k}{48k C(\epsilon_k)},
\end{equation}
with the estimate in \eqref{eq: erroretaHigh}, we have
\begin{equation}
\begin{split}
4 C(\epsilon_k) |\eta_k^q|\gamma_k+\epsilon_k|\eta_k^q|+\frac{1-\eta_k^q}{\delta t} & \le 8C(\epsilon_k)\gamma_k+2\epsilon_k+ \frac{\delta t^{k-1}}{2} \\
& \le \frac{1-\tau_k}{6k}+\frac{1-\tau_k}{6k}+\frac{1-\tau_k}{6k}\\
& \le \frac{1-\tau_k}{2k}.
\end{split}
\end{equation}

Then, taking the sum   \eqref{eq: 2.15} for $q$ from $k-1$ to $n \;(\le m)$, we obtain
\begin{equation*}
\begin{split}
 \sum_{i,j=1}^{k}g_{ij}&(\Delta {u}^{n+i-k},\Delta {u}^{n+j-k}) \\
& \le C(M_k,\,\tau_k)T+C(u^0,...,u^{k-1})+C_{A_k}k \delta t^{k}\sum_{q=0}^{n-1}\|\nabla {u}^q\|^2\\
& \le C(M_k,\,\tau_k)T+C(u^0,...,u^{k-1})+C_{A_k} k \delta t^{k-1}T M_k^2,
\end{split}
\end{equation*}
where $C(M_k,\,\tau_k)$ is a constant only depends on $M_k,\,\tau_k$,  $C(u^0,...,u^{k-1})$ only depends on $u^0,...,u^{k-1}$ and $C_{A_k}$ only depends on the coefficients in $A_k$.
Since $G=(g_{ij})$ is a positive definite symmetric matrix, we have
\begin{equation*}
\begin{split}
\lambda_G \|\Delta {u}^{n}\|^2 & \le \sum_{i,j=1}^{k}g_{ij} (\Delta{u}^{n+i-k},\Delta {u}^{n+j-k})\\
& \le C(M_k,\,\tau_k)T+C(u^0,...,u^{k-1})+C_{A_k} k \delta t^{k-1}T M_k^2.
\end{split}
\end{equation*}
where $\lambda_G>0$ is the minimum eigenvalue of $G=(g_{ij})$.
Together with \eqref{eq: H1boundHigh}, the above  implies
\begin{equation}\label{eq: uH2boundk}
\|{u}^{n}\|_{H^2}\le \frac{1}{\lambda_G}\sqrt{C(M_k,\,\tau_k)T+C(u^0,...,u^{k-1})+C_{A_k} k  T M_k^2}:=C_1,\, \forall \delta t<1,\; n\le m.
\end{equation}
Noting that
\begin{equation*}
\|{u}^{n}\|_{H^2}=|\eta_k^n|\|\bar{u}^n\|_{H_2},
\end{equation*}
then \eqref{eq: erroretaHigh} implies
\begin{equation}\label{eq: baruH2boundk}
\|\bar{u}^n\|_{H^2}\le 2 C_1, \,\,\forall \delta t<1,\; n\le m.
\end{equation}

\textbf{Step 2: estimate for $\|\bar{e}^{n+1}\|_{H^2}$ for all $0\le n\le m$.}
By Theorem \ref{Thm2.6} and \eqref{eq: baruH2boundk}
 we can choose  $C$ large enough such that
\begin{equation}\label{eq: H2high}
\|u(t)\|_{H^2}\le C,\,\, \forall t \le T,\,\|\bar{u}^q\|_{H^2} \le C,\,\forall q\le m.
\end{equation}
Since $H^2 \subset L^{\infty}$, without loss of generality, we can adjust $C$ such that
\begin{equation}\label{eq: ghigh}
|g^{(i)}[u(t)]|_{L^\infty}\le C, \,\, \forall t \le T;\,|g^{(i)}(\bar{u}^q)|_{L^\infty} \le C, \,\, \forall q \le m,\, i=0,1,2.
\end{equation}
From \eqref{eq: ubark}, we can write down the error equation  as
\begin{equation}\label{eq: errorebarhigh}
\alpha_k\bar{e}^{n+1}-A_k(\bar{e}^n)=A_k(u^n)-A_k(\bar{u}^n)+\delta t \Delta \bar{e}^{n+1}-\delta t\lambda \bar{e}^{n+1}+ R^n_k +\delta t Q^n_k,
\end{equation}
where $R^n_k$,\, $Q^n_k$ are given by
\begin{equation}\label{eq: R1high}
\begin{split}
R^n_k & =-\alpha_k u(t^{n+1})+A_k(u(t^n))+\delta t u_t(t^{n+1}) \\
& =\sum_{i=1}^{k} a_i \int_{t^{n+1-i}}^{t^{n+1}}(t^{n+1-i}-s)^k\frac{\partial^{k+1} u}{\partial t^{k+1}}(s)ds,
\end{split}
\end{equation}
with $a_i$ being some fixed and bounded constants determined by the truncation errors, and
\begin{equation}\label{eq: R2high}
Q^n_k=g[B_k(\bar{u}^n)]-g[u(t^{n+1})].
\end{equation}
 For example, in the case $k=3$, we have
\begin{equation*}
R_3^n=-3\int_{t^n}^{t^{n+1}}(t^n-s)^3 \frac{\partial^{4} u}{\partial t^{4}}(s)ds+\frac{3}{2}\int_{t^{n-1}}^{t^{n+1}}(t^{n-1}-s)^3 \frac{\partial^{4} u}{\partial t^{4}}(s)ds-\frac{1}{3}\int_{t^{n-2}}^{t^{n+1}}(t^{n-2}-s)^3 \frac{\partial^{4} u}{\partial t^{4}}(s)ds.
\end{equation*}

Taking the inner product of  \eqref{eq: errorebarhigh} with $\bar{e}^{n+1}-\tau_k \bar{e}^{n}$, it follows from Lemma \ref{lemmaODEH} that
\begin{equation}\label{eq: errorhigh}
\begin{split}
\sum_{i,j=1}^{k}g_{ij}&(\bar{e}^{n+1+i-k}, \bar{e}^{n+1+j-k}) -\sum_{i,j=1}^{k}g_{ij}( \bar{e}^{n+i-k}, \bar{e}^{n+j-k})\\
&+\|\sum_{i=0}^{k} \delta_i \bar{e}^{n+1+i-k}\|^2
+\delta t \|\nabla \bar{e}^{n+1}\|^2 +\lambda \delta t\|\bar{e}^{n+1}\|^2 \\
& = (A_k({u}^n)-A_k(\bar{u}^n), \bar{e}^{n+1}-\tau_k \bar{e}^{n})-\delta t (\Delta \bar{e}^{n+1}, \tau_k \bar{e}^{n})+\delta t \lambda(\bar{e}^{n+1},\tau_k \bar{e}^n)\\
&+ (R^n_k, \bar{e}^{n+1}-\tau_k \bar{e}^{n})
 +\delta t (Q^n_k, \bar{e}^{n+1}-\tau_k \bar{e}^{n}).
\end{split}
\end{equation}
In the following, we bound the right hand side of \eqref{eq: errorhigh}. Note that
\begin{equation*}
u^q=\eta_k^q\bar{u}^q, \quad |\eta_k^q-1| \le C_0^{k+1}\, \delta t^{k+1},\quad \forall q\le n.
\end{equation*}
 hence
\begin{equation}\label{eq: erroretahigh}
\begin{split}
|(A_k({u}^n)-A_k(\bar{u}^n), \bar{e}^{n+1}-\tau_k \bar{e}^{n})| & \le \frac{\|A_k({u}^n)-A_k(\bar{u}^n)\|^2}{2\delta t} +\frac{\delta t}{2}\|\bar{e}^{n+1}-\tau_k\bar{e}^n\|^2 \\
& \le CC_0^{2k+2} \delta t^{2k+1} +\delta t \|\bar{e}^{n+1}\|^2+\delta t \|\bar{e}^{n}\|^2.
\end{split}
\end{equation}
It follows from \eqref{eq: R1high} that
\begin{equation}\label{eq: absR1high}
\|R^n_k\|^2 \le C \delta t^{2k+1} \int_{t^{n+1-k}}^{t^{n+1}} \|\frac{\partial^{k+1}u}{\partial t^{k+1}}(s)\|^2 ds.
\end{equation}
And we can bound $Q^n_k$ based on \eqref{eq: ghigh} and \eqref{eq: R2high} as
\begin{equation}\label{eq: absR2high}
\begin{split}
|Q^n_k|& =\big |g[B_k(\bar{u}^n)]-g[B_k(u(t^n))]+g[B_k(u(t^n))]-g[u(t^{n+1})] \big | \\
 & \le C |B_k(\bar{e}^n)| +C |B_k(u(t^n))-u(t^{n+1})| \\
 & = C |B_k(\bar{e}^n)| +C \Big|\sum_{i=1}^{k}b_i \int_{t^{n+1-i}}^{t^{n+1}}(t^{n+1-i}-s)^{k-1}\frac{\partial^{k} u}{\partial t^{k}}(s)ds \Big |,
\end{split}
\end{equation}
where $b_i$ are some fixed and bounded constants determined by the truncation error. For example, in the case $k=3$, we have
\begin{equation*}
\begin{split}
B_3(u(t^n))-u(t^{n+1})& =-\frac{3}{2}\int_{t^{n}}^{t^{n+1}}(t^{n}-s)^2\frac{\partial^{3} u}{\partial t^{3}}(s)ds+\frac{3}{2}\int_{t^{n-1}}^{t^{n+1}}(t^{n-1}-s)^2\frac{\partial^{3} u}{\partial t^{3}}ds \\
& -\frac{1}{2}\int_{t^{n-2}}^{t^{n+1}}(t^{n-2}-s)^2\frac{\partial^{3} u}{\partial t^{3}}ds.
\end{split}
\end{equation*}
Therefore,
\begin{equation}\label{eq: errorR1high}
\begin{split}
|\big(R^n_k, \bar{e}^{n+1}-\tau_k \bar{e}^{n}\big)| & \le \frac{1}{2\delta t}\|R^n_k\|^2+ \delta t \|\bar{e}^{n+1}\|^2+\delta t \|\bar{e}^{n}\|^2 , \\
& \le  \delta t \|\bar{e}^{n+1}\|^2+\delta t \|\bar{e}^{n}\|^2 +C \delta t^{2k} \int_{t^{n+1-k}}^{t^{n+1}} \|\frac{\partial^{k+1}u}{\partial t^{k+1}}(s)\|^2 ds.
\end{split}
\end{equation}
\begin{equation}\label{eq: errorR2high}
\delta t |\big (Q^n_k, \bar{e}^{n+1}- \tau_k \bar{e}^{n} \big)| \le C\delta t \big (\|B_k(\bar{e}^n)\|^2+ \|\bar{e}^{n+1}\|^2+\|\bar{e}^{n}\|^2 \big) + C \delta t^{2k} \int_{t^{n+1-k}}^{t^{n+1}}\|\frac{\partial^{k}u}{\partial t^{k}}(s)\|^2ds.
\end{equation}
Now, combining \eqref{eq: errorhigh}, \eqref{eq: erroretahigh}, \eqref{eq: errorR1high}, \eqref{eq: errorR2high}, we arrive at
\begin{equation*}
\begin{split}
\sum_{i,j=1}^{k}g_{ij}&(\bar{e}^{n+1+i-k}, \bar{e}^{n+1+j-k}) -\sum_{i,j=1}^{k}g_{ij}( \bar{e}^{n+i-k}, \bar{e}^{n+j-k})\\
&+\|\sum_{i=0}^{k} \delta_i \bar{e}^{n+1+i-k}\|^2
+\frac{1}{2}\delta t \|\nabla \bar{e}^{n+1}\|^2
+ \frac{\lambda}{2}\delta t\|\bar{e}^{n+1}\|^2\\
& \le \frac{\tau_k}{2} \delta t \|\nabla \bar{e}^{n}\|^2+\frac{\lambda\tau_k}{2}\delta t\|\bar{e}^n\|^2+ C C_0^{2k+2} \delta t^{2k+1} + C\delta t \sum_{i=0}^{k}\|\bar{e}^{n+1-i}\|^2\\
& +C \delta t^{2k} \int_{t^{n+1-k}}^{t^{n+1}}(\|\frac{\partial^{k}u}{\partial t^{k}}(s)\|^2+\|\frac{\partial^{k+1}u}{\partial t^{k+1}}(s)\|^2)ds.
\end{split}
\end{equation*}
Taking the sum of the above for $n$ from $k-1$ to $m$,  noting that $G=(g_{ij})$ is a positive definite symmetric matrix with minimum eigenvalue $\lambda_G$, we obtain:
\begin{equation}\label{eq: 2.33}
\begin{split}
 \lambda_G \|\bar{e}^{m+1}\|^2 & \le  \sum_{i,j=1}^{k}g_{ij} (\bar{e}^{m+1+i-k}, \bar{e}^{m+1+j-k}) \\
& \le C \delta t \sum_{q=0}^{m+1} \|\bar{e}^q\|^2 +C \delta t^{2k} \int_{0}^{T} (\|\frac{\partial^{k}u}{\partial t^{k}}(s)\|^2+\|\frac{\partial^{k+1}u}{\partial t^{k+1}}(s)\|^2+C_0^{2k+2})ds
\end{split}
\end{equation}
We can obtain similar inequalities for $\|\nabla \bar{e}^m\|$ and $\|\Delta \bar{e}^m\|$ by using essentially the same procedure. Indeed,
 taking the inner product  of \eqref{eq: errorebarhigh} with $-\Delta \bar{e}^{n+1}+\tau_k \Delta \bar{e}^n$, by using Lemma \ref{lemmaODEH}, we obtain
 \begin{equation}\label{eq: errorhigh2}
\begin{split}
& \sum_{i,j=1}^{k}g_{ij}(\nabla \bar{e}^{n+1+i-k}, \nabla \bar{e}^{n+1+j-k}) -\sum_{i,j=1}^{k}g_{ij}(\nabla  \bar{e}^{n+i-k}, \nabla \bar{e}^{n+j-k})+\|\sum_{i=0}^{k} \delta_i \nabla \bar{e}^{n+1+i-k}\|^2\\
&+\delta t \|\Delta \bar{e}^{n+1}\|^2 +\lambda \delta t\|\nabla \bar{e}^{n+1}\|^2 \\
& = (\nabla A_k({u}^n)-\nabla A_k(\bar{u}^n), \nabla \bar{e}^{n+1}-\tau_k \nabla \bar{e}^{n})+\delta t (\Delta \bar{e}^{n+1}, \tau_k \Delta \bar{e}^{n})+\delta t \lambda(\nabla \bar{e}^{n+1},\tau_k \nabla \bar{e}^n)\\
&+ (R^n_k, -\Delta \bar{e}^{n+1}+\tau_k \Delta \bar{e}^{n})  +\delta t (Q^n_k, -\Delta \bar{e}^{n+1}+\tau_k \Delta \bar{e}^{n}).
\end{split}
\end{equation}
Taking the sum of the above for $n$ from $k-1$ to $m$, using Lemma \ref{lemmaODEH}, \eqref{eq: absR1high} and \eqref{eq: absR2high}, we can obtain
\begin{equation}\label{eq: H1boundhigh}
\begin{split}
 \lambda_G \|\nabla \bar{e}^{m+1}\|^2 & \le  \sum_{i,j=1}^{k}g_{ij} (\nabla \bar{e}^{m+1+i-k}, \nabla \bar{e}^{m+1+j-k}) \\
& \le C \delta t \sum_{q=0}^{m+1} \|\nabla \bar{e}^q\|^2 +C \delta t^{2k} \int_{0}^{T} (\|\frac{\partial^{k}u}{\partial t^{k}}(s)\|^2+\|\frac{\partial^{k+1}u}{\partial t^{k+1}}(s)\|^2+C_0^{2k+2})ds.
\end{split}
\end{equation}
On the other hand, taking the inner product of \eqref{eq: errorebarhigh} with $\Delta^2 \bar{e}^{n+1}-\tau_k \Delta^2 \bar{e}^n$, by using Lemma \ref{lemmaODEH}, we obtain
\begin{equation}\label{eq: errorhigh3}
\begin{split}
& \sum_{i,j=1}^{k}g_{ij}(\Delta \bar{e}^{n+1+i-k}, \Delta \bar{e}^{n+1+j-k}) -\sum_{i,j=1}^{k}g_{ij}( \Delta \bar{e}^{n+i-k}, \Delta \bar{e}^{n+j-k})+\|\sum_{i=0}^{k} \delta_i \Delta \bar{e}^{n+1+i-k}\|^2\\
& +\delta t \|\nabla \Delta \bar{e}^{n+1}\|^2 +\lambda \delta t\|\Delta \bar{e}^{n+1}\|^2 \\
& = (\Delta A_k({u}^n)-\Delta A_k(\bar{u}^n), \Delta \bar{e}^{n+1}-\tau_k \Delta \bar{e}^{n})+\delta t (\nabla \Delta \bar{e}^{n+1}, \tau_k \nabla \Delta \bar{e}^{n})+\delta t \lambda(\Delta \bar{e}^{n+1},\tau_k \Delta \bar{e}^n)\\& + (\nabla R^n_k, -\nabla \Delta \bar{e}^{n+1}+\tau_k \nabla \Delta \bar{e}^{n})
 +\delta t (\nabla Q^n_k, -\nabla \Delta \bar{e}^{n+1}+\tau_k \nabla \Delta \bar{e}^{n}).
\end{split}
\end{equation}
Here, we need to pay attention to the terms with $\nabla \Delta \bar{e}^{n+1}$ or $\nabla \Delta \bar{e}^{n}$. Firstly, we have
\begin{equation*}
|\delta t(\nabla \Delta \bar{e}^{n+1},\tau_k \nabla \Delta \bar{e}^n)|\le \frac{\delta t}{2}\|\nabla \Delta \bar{e}^{n+1}\|^2+\frac{\tau_k^2 \delta t}{2}\|\nabla \Delta \bar{e}^{n}\|^2.
\end{equation*}
It follows from \eqref{eq: R1high} and \eqref{eq: R2high} that
\begin{equation}\label{eq: errorR1high2}
\|\nabla R^n_k\|^2\le C \delta t^{2k+1} \int_{t^{n+1-k}}^{t^{n+1}} \|\nabla \frac{\partial^{k+1}u}{\partial t^{k+1}}(s)\|^2 ds,
\end{equation}
and
\begin{equation}\label{eq: errorR2high2}
\begin{split}
|\nabla Q^n_k|&\le C (|B_k(\bar{e}^n)|+|\nabla B_k(\bar{e}^n)|) +C \Big|\sum_{i=1}^{k}b_i \int_{t^{n+1-i}}^{t^{n+1}}(t^{n+1-i}-s)^{k-1}\frac{\partial^{k} u}{\partial t^{k}}(s)ds \Big |\\
& +C \Big|\sum_{i=1}^{k}b_i \int_{t^{n+1-i}}^{t^{n+1}}(t^{n+1-i}-s)^{k-1}\nabla\frac{\partial^{k} u}{\partial t^{k}}(s)ds \Big |.
\end{split}
\end{equation}
Therefore,
\begin{equation*}
\begin{split}
|(\nabla R^n_k, &-\nabla \Delta \bar{e}^{n+1}+\tau_k \nabla \Delta \bar{e}^{n})|  \le  \frac{C}{\delta t}\|\nabla R^n_k\|^2+ \frac{\delta t(1-\tau_k^2)}{16}\|-\nabla \Delta \bar{e}^{n+1}+\tau_k \nabla \Delta \bar{e}^{n}\|^2\\
& \le C \delta t^{2k}\int_{t^{n+1-k}}^{t^{n+1}}\|\nabla \frac{\partial^{k+1}u}{\partial t^{k+1}}(s)\|^2 ds
+ \frac{\delta t(1-\tau_k^2)}{8}(\|\nabla \Delta \bar{e}^{n+1}\|^2+\|\nabla \Delta \bar{e}^{n}\|^2),
\end{split}
\end{equation*}
and
\begin{equation*}
\begin{split}
\delta t |(\nabla Q^n_k, &-\nabla \Delta \bar{e}^{n+1}+\tau_k \nabla \Delta \bar{e}^{n})|  \le  C \delta t \|\nabla Q^n_k\|^2+ \frac{(1-\tau_k^2)\delta t}{16}\|-\nabla \Delta \bar{e}^{n+1}+\tau_k \nabla \Delta \bar{e}^{n}\|^2\\
& \le C \delta t \|B_k(\bar{e}^n)\|_{H^1}^2 + C \delta t^{2k}\int_{t^{n+1-k}}^{t^{n+1}}\| \frac{\partial^{k}u}{\partial t^{k}}(s)\|_{H^1}^2 ds\\
&+ \frac{(1-\tau_k^2)\delta t}{8}(\|\nabla \Delta \bar{e}^{n+1}\|^2+\|\nabla \Delta \bar{e}^{n}\|^2).
\end{split}
\end{equation*}
We can bound  other terms on the right hand side of \eqref{eq: errorhigh3} as before to arrive at
\begin{equation*}\label{eq: H2high1}
\begin{split}
& \sum_{i,j=1}^{k}g_{ij}(\Delta \bar{e}^{n+1+i-k}, \Delta \bar{e}^{n+1+j-k}) -\sum_{i,j=1}^{k}g_{ij}( \Delta \bar{e}^{n+i-k}, \Delta \bar{e}^{n+j-k})\\
& +\frac{(1+\tau_k^2) \delta t}{4} \|\nabla \Delta \bar{e}^{n+1}\|^2 +\frac{\lambda \delta t}{2}\|\Delta \bar{e}^{n+1}\|^2 \\
& \le C\delta t(\|B_k(\bar{e}^n)\|_{H^1}^2+\|\Delta \bar{e}^{n+1}\|^2+\|\Delta \bar{e}^{n}\|^2)+\frac{(1+\tau_k^2) \delta t}{4}\|\nabla \Delta \bar{e}^n\|^2+\frac{\lambda \tau_k^2 \delta t}{2}\|\Delta \bar{e}^n\|^2\\
&+C \delta t^{2k}\int_{t^{n+1-k}}^{t^{n+1}}(\| \frac{\partial^{k}u}{\partial t^{k}}(s)\|_{H^1}^2+\| \frac{\partial^{k+1}u}{\partial t^{k+1}}(s)\|_{H^1}^2+C_0^{2k+2}) ds.
\end{split}
\end{equation*}
Then, taking the sum of the above for $n$ from $k-1$ to $m$, we obtain
\begin{equation}\label{eq: H2boundhigh}
\begin{split}
\lambda_G \|\Delta \bar{e}^{m+1}\|^2 & \le \sum_{i,j=1}^{k}g_{ij}(\Delta \bar{e}^{m+1+i-k}, \Delta \bar{e}^{m+1+j-k})\\
& \le C\delta t  \sum_{q=0}^{m+1} \|\bar{e}^q\|_{H^2}^2+C \delta t^{2k}\int_{0}^{T}(\| \frac{\partial^{k}u}{\partial t^{k}}(s)\|_{H^1}^2+\| \frac{\partial^{k+1}u}{\partial t^{k+1}}(s)\|_{H^1}^2+C_0^{2k+2}) ds.
\end{split}
\end{equation}
Summing up \eqref{eq: 2.33}, \eqref{eq: H1boundhigh} and \eqref{eq: H2boundhigh}, we obtain
\begin{equation}\label{eq: bareH2}
\lambda_G\|\bar{e}^{m+1}\|_{H^2}^2 \le C\delta t  \sum_{q=0}^{m+1} \|\bar{e}^q\|_{H^2}^2+C \delta t^{2k}\int_{0}^{T}(\| \frac{\partial^{k}u}{\partial t^{k}}(s)\|_{H^1}^2+\| \frac{\partial^{k+1}u}{\partial t^{k+1}}(s)\|_{H^1}^2+C_0^{2k+2}) ds
\end{equation}
Finally, we can obtain the following $H^2$ estimate for $\bar{e}^{m+1}$ by applying the discrete Gronwall lemma to \eqref{eq: bareH2} with $\delta t<\frac{1}{2C}$:
\begin{equation}\label{eq: errorbarehigh}
\begin{split}
\| \bar{e}^{m+1}\|_{H^2}^2 & \le C\exp((1-\delta tC)^{-1}))\delta t^{2k} \int_{0}^{T} (\|\frac{\partial^{k}u}{\partial t^{k}}(s)\|_{H^1}^2+\|\frac{\partial^{k+1}u}{\partial t^{k+1}}(s)\|_{H^1}^2+C_0^{2k+2})ds\\
& \le C_2(1+C_0^{2k+2})\delta t^{2k}\quad \forall 0\le n\le m.
\end{split}
\end{equation}
where $C_2$ is independent of $\delta t$ and $C_0$, can be defined as
\begin{equation}\label{C2high}
C_2:=C \exp(2)\max \big( \int_{0}^{T} (\|\frac{\partial^{k}u}{\partial t^{k}}(s)\|_{H^1}^2+\|\frac{\partial^{k+1}u}{\partial t^{k+1}}(s)\|_{H^1}^2)ds, 1 \big).
\end{equation}
then $\delta t<\frac{1}{2C}$ can be guaranteed by
\begin{equation}\label{cond2}
\delta t <\frac{1}{C_2}.
\end{equation}
In particular, \eqref{eq: errorbarehigh} implies
\begin{equation}\label{eq: eH2high}
\|\bar{e}^{n+1}\|_{H^2}\le \sqrt{C_2(1+C_0^{2k+2})} \delta t^k,\quad \forall 0\le n\le m.
\end{equation}
Combining \eqref{eq: H2high} and \eqref{eq: eH2high}, under the condition \eqref{eq: dtcond1} we obtain
\begin{equation}\label{eq: Cbarhigh}
\|\bar{u}^{n+1}\|_{H^2} \le \sqrt{C_2(1+C_0^{2k+2})}\delta t^2+ C \le \sqrt{C_2(1+1)}+C:=\bar{C} \quad  0\le n\le m.
\end{equation}
Note that $H^2 \subset L^{\infty}$, without loss of generality, we can adjust
$\bar C$ independent of $C_0$ and $\delta t$ so that  we have
\begin{equation}\label{eq: Cbar2high}
\|g(\bar{u}^{n+1})\|,\;\|g'(\bar{u}^{n+1})\|\le \bar C\quad \forall 0\le n\le m.
\end{equation}

\textbf{Step 3: estimate for $|1-\xi^{m+1}|$}. By direct calculation,
\begin{equation}\label{eq: rtt}
r_{tt}=\int_{\Omega}\big( |\nabla u_t| ^2+ \nabla u \cdot \nabla u_{tt}+ \lambda u_{t}^2+\lambda u u_{tt}+g'(u)u_t^2+g(u)u_{tt}\big) d \bm x.
\end{equation}
It follows from \eqref{eq: Nsav3} that the equation for the errors can be written as
\begin{equation}\label{eq: errorxifirst}
s^{n+1}-s^{n}=
 \delta t\big(\|h[u(t^{n+1})]\|^2-\frac{r^{n+1}}{E(\bar{u}^{n+1})}\|h(\bar{u}^{n+1})\|^2 \big) +T_1^n,
\end{equation}
where $h(u)=\frac{\delta E}{\delta u}=-\Delta u +\lambda u- g(u)$, and
\begin{equation}\label{eq: T1}
T_1^n =r(t^n)-r(t^{n+1})+\delta t r_t(t^{n+1})= \int_{t^n}^{t^{n+1}}(s-t^n)r_{tt}(s)ds.
\end{equation}
Taking the sum of \eqref{eq: errorxifirst} for $n$ from $0$ to $m$,  and noting that $s^0=0$, we have
\begin{equation}\label{eq: errorxifirst2}
s^{m+1}=
\delta t\sum_{q=0}^{m}\big(\|h[u(t^{q+1})]\|^2
-\frac{r^{q+1}}{E(\bar{u}^{q+1})}\|h(\bar{u}^{q+1})\|^2 \big) +\sum_{q=0}^{m}T_1^q.
\end{equation}
We can bound the terms on the right hand side of \eqref{eq: errorxifirst2} as follow:
For $T_1^n$, noting \eqref{eq: rtt} we have
\begin{equation}\label{eq: boundT1}
|T_1^n| \le  C \delta t \int_{t^n}^{t^{n+1}}|r_{tt}| ds \le  C \delta t\int_{t^n}^{t^{n+1}}\big(\|u_t(s)\|_{H^1}^2+\|u_{tt}(s)\|_{H^1} \big) ds.
\end{equation}
Next,
\begin{equation}\label{eq: P1P2}
\begin{split}
\big |&\|h[u(t^{n+1})]\|^2-\frac{r^{n+1}}{E(\bar{u}^{n+1})}\|h(\bar{u}^{n+1})\|^2 \big |\\
& \le \|h[u(t^{n+1})]\|^2\big|1-\frac{r^{n+1}}{E(\bar{u}^{n+1})} \big|+\frac{r^{n+1}}{E(\bar{u}^{n+1})}\big|\|h[u(t^{n+1})]\|^2-\|h(\bar{u}^{n+1})\|^2 \big|\\
&:=P^n_1+P^n_2.
\end{split}
\end{equation}
For $P^n_1$, it follows from \eqref{eq: H2high}, $E(v)>\underbar{C}>0\, ,\forall v$ and Theorem \ref{stableThm}  that
\begin{equation}\label{eq: P1}
\begin{split}
P^n_1 & \le C \big|1-\frac{r^{n+1}}{E(\bar{u}^{n+1})} \big|\\
& \le C \big|\frac{r(t^{n+1})}{E[u(t^{n+1})]}-\frac{r^{n+1}}{E[u(t^{n+1})]} \big|+C \big|\frac{r^{n+1}}{E[u(t^{n+1})]}-\frac{r^{n+1}}{E(\bar{u}^{n+1})} \big|\\
& \le C \big(|E[u(t^{n+1})]-E(\bar{u}^{n+1})|+|s^{n+1}|\big).
\end{split}
\end{equation}
For $P^n_2$, it follows from \eqref{eq: H2high}, \eqref{eq: ghigh}, \eqref{eq: Cbarhigh},\,\eqref{eq: Cbar2high},\, $E(v)>\underbar{C}>0$ and Theorem \ref{stableThm}  that
\begin{equation}\label{eq: P2}
\begin{split}
P^n_2 &\le C \big |\|h(\bar{u}^{n+1})\|^2-\|h[u(t^{n+1})]\|^2 \big | \\
& \le C \|h(\bar{u}^{n+1})-h[u(t^{n+1})]\|(\|h(\bar{u}^{n+1})\|+\|h[u(t^{n+1})]\|)\\
& \le C\bar{C} \big(\|\Delta \bar{e}^{n+1}\|+\lambda\|\bar{e}^{n+1}\|+\|g(\bar{u}^{n+1})-g[u(t^{n+1})]\| \big)\\
& \le C\bar{C} \big(\|\Delta \bar{e}^{n+1}\|+ \|\bar{e}^{n+1}\| \big).
\end{split}
\end{equation}
On the other hand,
\begin{equation}\label{eq: Lun+1}
\begin{split}
|E[u(t^{n+1})]-E(\bar{u}^{n+1})| & \le \frac{1}{2}\big(\|\nabla u(t^{n+1})\|+\|\nabla \bar{u}^{n+1}\| \big)\|\nabla u(t^{n+1})-\nabla \bar{u}^{n+1}\| \\
& + \frac{\lambda}{2}\big(\|u(t^{n+1})\|+\| \bar{u}^{n+1}\| \big)\| u(t^{n+1})- \bar{u}^{n+1}\| \\
&+ \int F[u(t^{n+1})] dx - \int F(\bar{u}^{n+1})dx \\
& \le C\bar{C} \big( \|\nabla \bar{e}^{n+1}\|+ \|\bar{e}^{n+1}\| \big).
\end{split}
\end{equation}
Now, combining \eqref{eq: eH2high}, \eqref{eq: errorxifirst2}- \eqref{eq: Lun+1}, we arrive at
\begin{equation*}
\begin{split}
|s^{m+1}| & \le \delta t\sum_{q=0}^{m}\big |\|h[u(t^{q+1})]\|^2
-\frac{r^{q+1}}{E(\bar{u}^{q+1})}\|h(\bar{u}^{q+1})\|^2 \big|  +\sum_{q=0}^{m}|T_1^q|\\
 & \le  C \delta t \sum_{q=0}^{m}|s^{q+1}|+C\bar{C}\delta t \sum_{q=0}^{m}\|\bar{e}^{q+1}\|_{H^2}
+ C\delta t\int_0^{T}(\|u_t(s)\|^2_{H^1}+\|u_{tt}(s)\|_{H^1}) ds\\
\le & C \delta t \sum_{q=0}^{m}|s^{q+1}| +C\bar{C} \sqrt{C_2(1+C_0^{2k+2})}\delta t^k+C \delta t.
\end{split}
\end{equation*}
Applying the discrete Gronwall lemma to the above inequality with $\delta t<\frac{1}{2C}$, we obtain
\begin{equation}\label{eq: sn+1}
\begin{split}
|s^{m+1}| & \le C \exp((1-C\delta t)^{-1}) \delta t(\bar{C} \sqrt{C_2(1+C_0^{2k+2})}\delta t^{k-1}+1)\\
& \le C_3 \delta t(\bar{C} \sqrt{C_2(1+C_0^{2k+2})}\delta t^{k-1}+1),
\end{split}
\end{equation}
where $C_3$  is independent of $C_0$ and $\delta t$, can be defined as
\begin{equation}\label{C3second}
C_3:=C\exp(2),
\end{equation}
then $\delta t<\frac{1}{2C}$ can be guaranteed by
\begin{equation}\label{cond3}
\delta t <\frac{1}{C_3}.
\end{equation}
Hence, noting \eqref{eq: P1}, \eqref{eq: Lun+1},\eqref{eq: sn+1} and \eqref{eq: Cbarhigh}, we have
\begin{equation}\label{eq: boundxin+1}
\begin{split}
|1-\xi^{m+1}| & \le C \big(|E[u(t^{m+1})]-E(\bar{u}^{m+1})|+|s^{m+1}|\big)\\
& \le C(\bar{C}\|\bar{e}^{m+1}\|_{H^1}+|s^{m+1}|)\\
& \le C\delta t \big(\bar{C} \sqrt{C_2(1+C_0^{2k+2})} \delta t^{k-1} +C_3(\bar{C} \sqrt{C_2(1+C_0^{2k+2})} \delta t^{k-1}+1) \big)\\
&\le C_4\delta t(\sqrt{1+C_0^{2k+2}}\delta t^{k-1}+1),
\end{split}
\end{equation}
where the constant $C_4$   is independent of $C_0$ and $\delta t$. Without loss of generality, we  assume $C_4>\max\{C_2, C_3,1\}$ to simplify the proof below.

As a result of \eqref{eq: boundxin+1},  $|1-\xi^{m+1}|\le C_0 \delta t$ if we define $C_0$ such that
\begin{equation}\label{eq: C_0}
C_4(\sqrt{1+C_0^{2k+2}}\delta t^{k-1}+1)\le C_0.
\end{equation}
For the cases $k \ge 2$, the above can be satisfied if we choose $C_0=3C_4$ and $\delta t \le \frac1{1+C_0^{k+1}}$:
\begin{equation}\label{C0def}
C_4(\sqrt{1+C_0^{2k+2}}\delta t^{k-1}+1)\le C_4[(1+C_0^{k+1})\delta t+1] \le 3C_4=C_0.
\end{equation}
For the case $k=1$, we can not define $C_0$ satisfying \eqref{eq: C_0} if  $\eta_1^{n+1}=1-(1-\xi^{n+1})^2$.
However, if we choose $\eta_1^{n+1}=1-(1-\xi^{n+1})^3$, we  can repeat the same process above and arrive at a similar version of \eqref{eq: C_0} for the first order case:
\begin{equation}\label{C_01st}
C_4(\sqrt{1+C_0^{6}}\delta t+1)\le C_0.
\end{equation}
The above  can be satisfied if we choose $C_0=3C_4$ and $\delta t<\frac{1}{C_0^3}$ so that
\begin{equation*}\label{C0def2}
C_4(\sqrt{1+C_0^{6}\delta t^{2}}+1)\le C_4[1+C_0^{3}\delta t+1] \le  3C_4=C_0.
\end{equation*}
To summarize, under the condition
\begin{equation}\label{cond4}
\delta t \le \frac1{1+C_0^{k+2}}, \quad 1\le k \le 5,
\end{equation}
we have $|1-\xi^{m+1}|\le C_0 \delta t$. Note that with  $C_4>\max\{C_2, C_3,1\}$, \eqref{cond4} also implies \eqref{cond2} and \eqref{cond3}. The induction process for \eqref{eq: prestepsk} is complete.
\medskip

Finally, thanks to \eqref{eq: eH2high}, it remains to show $\|e^{m+1}\|_{H^2}\le C \delta t^k$.

We derive from \eqref{eq: Nsav2} and \eqref{eq: Cbarhigh} that
\begin{equation}\label{eq: errorxin+1h}
 \|u^{m+1}-\bar{u}^{m+1}\|_{H^2} \le |\eta_k^{m+1}-1|
\|\bar u^{m+1}\|_{H^2} \le |\eta_k^{m+1}-1| \bar{C}.
\end{equation}
On the other hand, we derive from \eqref{eq: prestepsk} that
\begin{equation}\label{eq: errorxin+2h}
|\eta_k^{m+1}-1| \le C_0^{k+1} \delta t^{k+1}.
\end{equation}
Then it follows from \eqref{eq: eH2high}, \eqref{eq: errorxin+1h} and \eqref{eq: errorxin+2h} and combine the condition \eqref{eq: dtcond1}, \eqref{cond5} and \eqref{cond4} on $\delta t$ that
\begin{equation*}
\begin{split}
\|e^{m+1}\|_{H^2}^2 & \le 2\|\bar{e}^{m+1}\|_{H^2}^2+2\|u^{m+1}-\bar{u}^{m+1}\|_{H^2}^2\\
& \le 2{C_2(1+C_0^{2(k+1)})} \delta t^{2k} + 2 \bar{C}^2C_0^{2(k+1)} \delta t^{2(k+1)}
\end{split}
\end{equation*}
holds under the condition $\delta t< \min\{\frac{1}{1+2C_0^{k+2}},\frac{1-\tau_k}{3k}\}$. The proof is complete.
\end{proof}

\begin{rem}\label{remeta}
Note that   we set $\eta_1^{n+1}=1-(1-\xi^{n+1})^3$ purely  for technical reasons in the proof. It is clear that $\eta_1^{n+1}=1-(1-\xi^{n+1})^2$ leads to first-order accuracy which is confirmed
by our numerical tests.
\end{rem}

\section{Error analysis for Cahn-Hilliard type equations}
In this section,
we consider the Cahn-Hilliard type equation
\begin{equation}\label{eq: CH}
\frac{\partial u}{\partial t}=-\Delta^2 u+\lambda \Delta u-\Delta g(u) \quad (\bm x,t)\in \Omega\times (0,T],
\end{equation}
where $\Omega$ is an open bounded domain in $\mathbb{R}^d\,(d=1,2,3)$, with the initial condition $u(\bm x,0)= u^0(\bm x)$ and  boundary conditions
\begin{equation}\label{CHBC}
\text{periodic}, \,\text{or}, \, \frac{\partial u}{\partial \bm n}|_{\partial\Omega}=\frac{\partial \Delta u}{\partial \bm n}|_{\partial\Omega}=0.
\end{equation}
The above equation is a special case of \eqref{dissE} with  $\mathcal{A}=\Delta^2-\lambda\Delta$ and $g(u)$ replaced by $-\Delta g(u)$. It   satisfies the dissipation law \eqref{dissL} with $E(u)=\frac12(\mathcal{L}u,u)+(G(u),1)$ where $(\mathcal{L}u,u)=(\nabla u,\nabla u)+\lambda (u,u)$,  $G(u)=\int^u g(v)dv$ and $\mathcal{K}(u)=(\nabla \frac{\delta E}{\delta u}, \nabla \frac{\delta E}{\delta u})$.

In particular, with $g(u)=(1-u^2)u$ and $\lambda=0$, the above equation becomes the celebrated Cahn-Hilliard equation \cite{Cah.H58}.

We first recall the following result (cf. for instance \cite{temam2012}).
\begin{thm}\label{prop}
Let $u^0 \in H^2$, and \eqref{eq: 2.6} holds. We assume additionally
\begin{equation}\label{eq: 2.7}
|g''(x)|< C(|x|^{p'}+1),\quad  p'>0 \,\,  arbitrary \quad  \,if \,  \,n=1,2;\quad  \, 0<p'<3 \quad if \, n=3.
\end{equation}
Then for any $T>0$, there exists a unique solution $u$ for  \eqref{eq: CH} such that
\begin{equation*}
u\in C([0,T];H^2) \cap L^2(0,T;H^4).
\end{equation*}
\end{thm}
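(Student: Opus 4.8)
The plan is to establish existence through a Faedo--Galerkin approximation combined with uniform a priori estimates and a compactness argument, and to prove uniqueness by a direct energy estimate on the difference of two solutions. Let $\{w_j\}_{j\ge 1}$ be the $L^2(\Omega)$-orthonormal basis of eigenfunctions of $-\Delta$ subject to the boundary conditions \eqref{CHBC}, and set $V_m=\mathrm{span}\{w_1,\dots,w_m\}$. Seeking $u_m(t)=\sum_{j=1}^m c_j^m(t)w_j$ solving the Galerkin projection of \eqref{eq: CH} with $u_m(0)$ the $L^2$-projection of $u^0$ onto $V_m$ produces a system of ODEs whose right-hand side is locally Lipschitz in the coefficients $c_j^m$ (thanks to the smoothness of $g$), so the Cauchy--Lipschitz theorem gives a unique local solution, which the a priori bounds below extend to $[0,T]$.

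For the estimates I would proceed in two stages. First, testing the Galerkin equations with the discrete chemical potential $\mu_m=-\Delta u_m+\lambda u_m+g(u_m)$ reproduces at the discrete level the dissipation law \eqref{dissL}, giving $\frac{d}{dt}E(u_m)=-\|\nabla\mu_m\|^2\le 0$; since $E(u_m(0))\le E(u^0)$ is bounded, $\int_\Omega G$ is bounded from below by \eqref{barc}, and the mean of $u_m$ is conserved, a Poincar\'e inequality yields a uniform bound for $u_m$ in $L^\infty(0,T;H^1)$ together with $\nabla\mu_m$ in $L^2(0,T;L^2)$. Second, to reach the target regularity I would test with $\Delta^2 u_m$ (an admissible test function since the $w_j$ are eigenfunctions of $\Delta^2$), which after integration by parts, legitimate under \eqref{CHBC}, gives $\frac12\frac{d}{dt}\|\Delta u_m\|^2+\|\Delta^2 u_m\|^2+\lambda\|\nabla\Delta u_m\|^2=-(\Delta g(u_m),\Delta^2 u_m)$. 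Writing $\Delta g(u)=g'(u)\Delta u+g''(u)|\nabla u|^2$ and invoking the growth bounds \eqref{eq: 2.6}--\eqref{eq: 2.7} together with the Sobolev embeddings valid for $d\le 3$, the right-hand side is absorbed into $\tfrac12\|\Delta^2 u_m\|^2$ up to lower-order terms controlled by the $H^1$ bound; a Gronwall argument then bounds $u_m$ in $L^\infty(0,T;H^2)\cap L^2(0,T;H^4)$, and the equation itself bounds $\partial_t u_m$ in $L^2(0,T;L^2)$.

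With these uniform bounds, the Aubin--Lions--Simon lemma provides a subsequence with $u_m\rightharpoonup u$ weakly-$*$ in $L^\infty(0,T;H^2)$, weakly in $L^2(0,T;H^4)$, and strongly in $L^2(0,T;H^s)$ for any $s<4$ (hence a.e. in space-time). The strong convergence together with the continuity of the Nemytskii operator $v\mapsto g(v)$ under \eqref{eq: 2.6}--\eqref{eq: 2.7} lets me pass to the limit in $\Delta g(u_m)$, so that $u$ solves \eqref{eq: CH} with the claimed regularity; membership in $C([0,T];H^2)$ then follows from $u\in L^2(0,T;H^4)$, $\partial_t u\in L^2(0,T;L^2)$ and the Lions--Magenes interpolation theorem, since $H^2=[H^4,L^2]_{1/2}$. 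For uniqueness, the difference $w=u_1-u_2$ of two solutions satisfies $w_t=-\Delta^2 w+\lambda\Delta w-\Delta(g(u_1)-g(u_2))$ with $w(0)=0$; testing with $w$ and using that $u_1,u_2\in L^\infty(0,T;H^2)\hookrightarrow L^\infty(0,T;L^\infty)$ yields the Lipschitz bound $\|g(u_1)-g(u_2)\|\le C\|w\|$, after which Young's inequality absorbs $\|\Delta w\|^2$ and Gronwall's lemma forces $w\equiv 0$.

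The main obstacle is the nonlinear term in the second a priori estimate. In dimension $d=3$ the energy estimate only supplies an $H^1$ bound, so $g''(u)|\nabla u|^2$ cannot be controlled in $L^2$ directly; the restrictions $0<p<4$ and $0<p'<3$ in \eqref{eq: 2.6}--\eqref{eq: 2.7} are precisely what place the relevant products below the critical Sobolev exponents, and a careful interpolation, or a uniform Gronwall/bootstrap argument that raises the regularity progressively from $H^1$ to $H^2$, is needed to close the estimate. Once the $H^2$ bound is in hand, $H^2\hookrightarrow L^\infty$ for $d\le 3$ trivializes all remaining nonlinear estimates, so the entire difficulty is concentrated in this single bootstrapping step.
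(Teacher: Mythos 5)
The paper does not actually prove this statement: it is recalled as a known well-posedness/regularity result, with a pointer to the literature (``cf.\ for instance \cite{temam2012}''), since its only role is to guarantee that the exact solution of \eqref{eq: CH} is regular enough for the subsequent error analysis. So there is no in-paper argument to compare against; what you wrote is, in outline, the standard proof that the cited reference supplies, and it is essentially correct. Two observations are worth making. First, the step you single out as the main obstacle---controlling $\Delta g(u_m)=g'(u_m)\Delta u_m+g''(u_m)|\nabla u_m|^2$ in $L^2$ using only the $H^1$ energy bound, with the growth restrictions $p<4$, $p'<3$ when $d=3$---is precisely the content of Lemma \ref{lemma2}, which the paper recalls from \cite{SX18} immediately after the theorem: $\|\Delta g(u)\|^2\le C(M)\bigl(1+\|\Delta^2 u\|^{2\sigma}\bigr)$ with $0\le\sigma<1$ whenever $\|u\|_{H^1}\le M$ and \eqref{eq: 2.6}--\eqref{eq: 2.7} hold. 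Invoking this lemma, Young's inequality absorbs the right-hand side of your $\Delta^2 u_m$ estimate into $\tfrac12\|\Delta^2 u_m\|^2$ plus a constant depending only on $M$, and Gronwall closes the $L^\infty(0,T;H^2)\cap L^2(0,T;H^4)$ bound in a single step; no progressive bootstrap from $H^1$ to $H^2$ is needed. Second, two technical points you gloss over: (i) since $g(u_m)\notin V_m$ in general, the discrete chemical potential must be taken as the projection $P_m\bigl(-\Delta u_m+\lambda u_m-g(u_m)\bigr)$; the dissipation identity then produces $\|\nabla P_m\mu_m\|^2$, which suffices, and the manipulation is legitimate because $P_m$ commutes with $\Delta$ for an eigenfunction basis (also mind the sign: for \eqref{eq: CH} as written the chemical potential carries $-g(u)$, matching the paper's $h$; your $+g(u)$ convention corresponds to the equation with $+\Delta g(u)$, which is immaterial since the hypotheses on $g$ are symmetric under $g\mapsto-g$); (ii) your Aubin--Lions step needs the bound on $\partial_t u_m$ in $L^2(0,T;L^2)$, which indeed follows from the equation, but only \emph{after} the second a priori estimate, since it requires $\Delta g(u_m)$ and $\Delta^2 u_m$ in $L^2(0,T;L^2)$---the order of the two stages is therefore essential, as your write-up implicitly respects.
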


We also recall the following result (see Lemma 2.3 in \cite{SX18}) which we shall use to deal with the nonlinear term.

\begin{lemma}\label{lemma2}
Assume that $\|u\|_{H^1} \le M$, and that \eqref{eq: 2.6} and \eqref{eq: 2.7} hold.
Then for any $u \in H^4$, there exist $0 \le \sigma <1$ and a constant $C(M)$ such that the following inequality holds:
\begin{equation*}
\|\Delta g(u)\|^2 \le C(M)(1+\|\Delta^2 u\|^{2\sigma}).
\end{equation*}
\end{lemma}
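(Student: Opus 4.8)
The plan is to base everything on the chain-rule identity
\[
\Delta g(u) = g'(u)\,\Delta u + g''(u)\,|\nabla u|^2,
\]
which gives
\[
\|\Delta g(u)\| \le \|g'(u)\,\Delta u\| + \big\|g''(u)\,|\nabla u|^2\big\|.
\]
I would estimate each term on the right by H\"older's inequality, using the growth hypotheses \eqref{eq: 2.6} and \eqref{eq: 2.7} to convert $g'(u)$ and $g''(u)$ into powers of $u$, and then Gagliardo--Nirenberg interpolation to distribute the remaining $L^r$ norms between the controlled quantity $\|u\|_{H^1}\le M$ and $\|u\|_{H^4}$. Under the boundary conditions \eqref{CHBC}, elliptic regularity gives $\|u\|_{H^4}\le C(\|\Delta^2 u\|+\|u\|)$, so every occurrence of $\|u\|_{H^4}$ becomes $C(M)+C\|\Delta^2 u\|$, and the goal reduces to showing that the exponent of $\|\Delta^2 u\|$ produced in each term is strictly below one.

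For the first term, with H\"older exponents $\tfrac1a+\tfrac1b=\tfrac12$ I would write
\[
\|g'(u)\,\Delta u\| \le \|g'(u)\|_{L^a}\,\|\Delta u\|_{L^b} \le C\big(1+\|u\|_{L^{pa}}^{p}\big)\,\|\Delta u\|_{L^b}.
\]
The factor $\|\Delta u\|_{L^b}$ is interpolated between $H^1$ and $H^4$ as $\|\Delta u\|_{L^b}\le C\|u\|_{H^4}^{\theta}\|u\|_{H^1}^{1-\theta}$, where the scaling relation $2-\tfrac{d}{b}=(1-\tfrac{d}{2})+3\theta$ forces $\theta<1$ for every admissible $b\ge 2$ because $d\le 3$. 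The factor $\|u\|_{L^{pa}}$ is controlled by $\|u\|_{H^1}$ alone whenever $pa\le \tfrac{2d}{d-2}$, and otherwise by a further interpolation between $H^1$ and $H^4$ contributing an additional small power of $\|\Delta^2 u\|$. Choosing $(a,b)$ appropriately yields $\|g'(u)\,\Delta u\|\le C(M)(1+\|\Delta^2 u\|^{\sigma_1})$ with $\sigma_1<1$; in $d=3$ the feasibility of this choice reproduces exactly the restriction $p<4$ appearing in \eqref{eq: 2.6}.

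The second term is handled the same way, now with
\[
\big\|g''(u)\,|\nabla u|^2\big\| \le \|g''(u)\|_{L^a}\,\big\||\nabla u|^2\big\|_{L^b}
= \|g''(u)\|_{L^a}\,\|\nabla u\|_{L^{2b}}^{2},
\]
using \eqref{eq: 2.7} to bound $\|g''(u)\|_{L^a}\le C(1+\|u\|_{L^{p'a}}^{p'})$ and interpolating $\|\nabla u\|_{L^{2b}}$ between $H^1$ and $H^4$. Since the gradient enters squared, the budget for the power of $\|\Delta^2 u\|$ is tighter, which is why the admissible growth of $g''$ is more restrictive than that of $g'$; in $d=3$ one needs precisely $p'<3$, as in \eqref{eq: 2.7}. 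Setting $\sigma=\max\{\sigma_1,\sigma_2\}<1$ and squaring then gives $\|\Delta g(u)\|^2\le C(M)(1+\|\Delta^2 u\|^{2\sigma})$. For $d=1,2$ the argument is easier, since $H^1$ embeds into every $L^r$ with $r<\infty$ and the growth of $g'$, $g''$ may be arbitrary.

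The main obstacle is the bookkeeping in $d=3$: one must choose the H\"older split $(a,b)$ and the Gagliardo--Nirenberg parameters simultaneously so that $\|u\|_{H^1}$ absorbs as much as possible while the total exponent of $\|\Delta^2 u\|$ stays strictly below one. Verifying that the thresholds $p<4$ and $p'<3$ are the exact borderline conditions for $\sigma<1$ is the delicate point; everything else is routine once the interpolation exponents are pinned down.
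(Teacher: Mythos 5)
The paper does not actually prove this lemma: it is recalled verbatim from Lemma 2.3 of \cite{SX18}, so there is no in-paper proof to compare against, and your proposal amounts to reconstructing the standard argument of that reference. Your reconstruction is correct and follows essentially that same route --- the decomposition $\Delta g(u)=g'(u)\Delta u+g''(u)|\nabla u|^2$, H\"older together with the growth conditions \eqref{eq: 2.6}--\eqref{eq: 2.7}, Gagliardo--Nirenberg interpolation between $H^1$ and $H^4$, and elliptic regularity under \eqref{CHBC} --- and your exponent bookkeeping checks out: in $d=3$ the total power of $\|\Delta^2 u\|$ comes to $p/6+1/3$ for the term $g'(u)\Delta u$ and $p'/6+1/2$ for the term $g''(u)|\nabla u|^2$ (independently of the H\"older split, since $1/a+1/b=1/2$ cancels), and these are strictly below one precisely when $p<4$ and $p'<3$.
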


For \eqref{eq: CH}, the $k$th-order version of
 \eqref{eq: Nsav1} and \eqref{Nsav4} read:
 \begin{equation}\label{eq: ubarCH}
\frac{\alpha_k \bar{u}^{n+1}-A_k(u^n)}{ \delta t}=-\Delta\big(\Delta \bar{u}^{n+1}-\lambda \bar{u}^{n+1}+g[B_k(\bar{u}^n)]\big),
\end{equation}
and
\begin{equation}
\frac{\alpha_k u^{n+1}-\eta^{n+1}_{k}A_k(u^n)}{ \delta t}=-\Delta\big(\Delta u^{n+1}-\lambda u^{n+1}+\eta_k^{n+1}g[B_k(\bar{u}^n)]\big),\label{eq: NsavCH1} 
\end{equation}
where $\alpha_k$, $A_k$, $B_k$ defined in \eqref{eq:bdf3} - \eqref{eq:bdf5}.

\begin{theorem}\label{ThmCH} Given initial condition $\bar{u}^0=u^0=u(0)$,  $r^0=E[u^0]$.
Let $\bar u^{n+1}$ and $u^{n+1}$ be computed with the $k$th order scheme  \eqref{eq: Nsav1}- \eqref{eq: Nsav2} $(1 \le k \le5)$ for \eqref{eq: CH} with
\begin{equation*}
 \eta_1^{n+1}=1-(1-\xi^{n+1})^{3}, \quad \eta_k^{n+1}=1-(1-\xi^{n+1})^{k+1}\; (k=2,3,4,5).
\end{equation*}
We assume \eqref{eq: 2.6} and \eqref{eq: 2.7} hold, and
\begin{equation*}
u \in C([0,T];H^3),\; \frac{\partial ^j u}{\partial t^j} \in L^2(0,T;H^2) \,\, 1\le j \le k,\;
\frac{\partial ^{k+1} u}{\partial t^{k+1}} \in L^2(0,T;L^2).
\end{equation*}
Then for $n+1 \le T/\delta t$ and $\delta t \le \text{min}\{\frac{1}{1+4C_0^{k+2}},\frac{1-\tau_k}{3k}\}$, we have
\begin{equation*}
\|\bar e^{n+1}\|_{H^2},\; \|e^{n+1}\|_{H^2}\le C \delta t^{k},
\end{equation*}
where the constants $C_0$, $C$ are dependent on $T,\, \Omega,$ the $k\times k$ matrix $G=(g_{ij})$ in Lemma \ref{lemmaODEH} and the exact solution $u$ but are independent of $\delta t$.
\end{theorem}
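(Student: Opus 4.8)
The plan is to follow the same three-step induction as in the proof of Theorem \ref{ThmAC}, establishing $|1-\xi^q|\le C_0\delta t$ for all $q\le T/\delta t$ by induction on the step index and then deducing the $H^2$ error bounds. The structural changes are that the principal operator is now biharmonic, that the nonlinearity carries an extra Laplacian ($-\Delta g(u)$), and that $\mathcal{K}(u)=\|\nabla h(u)\|^2$ with $h(u)=-\Delta u+\lambda u-g(u)$; accordingly Lemma \ref{lemma2} replaces Lemma \ref{lemma}, and the regularity is supplied by Theorem \ref{prop}. Assuming $|1-\xi^q|\le C_0\delta t$ for $q\le m$, I would prove the same bound for $q=m+1$.

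For \textbf{Step 1} (uniform $H^2$ bound for $u^n,\bar u^n$, $n\le m$), the stability estimate \eqref{eq: bound} again gives $\|u^q\|_{H^1}\le M_k$, and the induction hypothesis controls $\eta_k^q$ and $\|\bar u^q\|_{H^1}$ as in \eqref{eq: erroretaHigh}--\eqref{eq: ubarH1boundHigh}. I would test \eqref{eq: NsavCH1} (written at step $q$) against $\Delta^2 u^q-\tau_k\Delta^2 u^{q-1}$; integrating the time-difference by parts produces $(\Delta(\alpha_k u^q-\eta_k^q A_k(u^{q-1})),\Delta u^q-\tau_k\Delta u^{q-1})$, to which Lemma \ref{lemmaODEH} applies with $\Delta u$ in place of $u$, while the biharmonic part yields the dissipation $\|\Delta^2 u^q\|^2+\lambda\|\nabla\Delta u^q\|^2$. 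The nonlinear contribution $-(\Delta g[B_k(\bar u^{q-1})],\Delta^2 u^q)$ is absorbed into this dissipation after bounding $\|\Delta g[B_k(\bar u^{q-1})]\|^2$ by Lemma \ref{lemma2} and choosing the auxiliary constants small as in \eqref{cond5} (exploiting $\tau_k<1$). Summing over $q$ and using positive-definiteness of $G$ yields $\|u^n\|_{H^2}\le C_1$ and hence $\|\bar u^n\|_{H^2}\le 2C_1$.

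For \textbf{Step 2} (estimate of $\|\bar e^{n+1}\|_{H^2}$, $0\le n\le m$), the error equation derived from \eqref{eq: ubarCH} reads $\alpha_k\bar e^{n+1}-A_k(\bar e^n)=A_k(u^n)-A_k(\bar u^n)-\delta t\,\Delta^2\bar e^{n+1}+\delta t\lambda\Delta\bar e^{n+1}-\delta t\,\Delta Q_k^n+R_k^n$, with $R_k^n,Q_k^n$ as in \eqref{eq: R1high}--\eqref{eq: R2high}. I would pair it successively with $\bar e^{n+1}-\tau_k\bar e^n$, with $-\Delta\bar e^{n+1}+\tau_k\Delta\bar e^n$, and with $\Delta^2\bar e^{n+1}-\tau_k\Delta^2\bar e^n$; by Lemma \ref{lemmaODEH} these give telescoping in $\|\bar e\|^2$, $\|\nabla\bar e\|^2$ and $\|\Delta\bar e\|^2$, together with the respective dissipations $\|\Delta\bar e^{n+1}\|^2$, $\|\nabla\Delta\bar e^{n+1}\|^2$ and $\|\Delta^2\bar e^{n+1}\|^2$. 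The truncation $R_k^n$ is kept in $L^2$ and paired against the appropriate-order dissipation (needing only $\partial_t^{k+1}u\in L^2(L^2)$), while $\|\Delta Q_k^n\|$ is controlled by $\|B_k(\bar e^n)\|_{H^2}$ plus a truncation requiring $\partial_t^k u\in L^2(H^2)$, using $H^2\hookrightarrow L^\infty\cap W^{1,6}$ and the $L^\infty$ bounds on $g',g'',g'''$. Summing and applying the discrete Gronwall lemma gives $\|\bar e^{m+1}\|_{H^2}\le\sqrt{C_2(1+C_0^{2k+2})}\,\delta t^k$ and, crucially for Step 3, the dissipation control $\delta t\sum_q\|\nabla\Delta\bar e^{q}\|^2\le C\delta t^{2k}$, i.e. an $\ell^2(0,T;H^3)$ bound on the error.

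The main obstacle is \textbf{Step 3} (estimate of $|1-\xi^{m+1}|$), since $\mathcal{K}(u)=\|\nabla h(u)\|^2$ now involves third derivatives that Step 2 does not control pointwise in time. Writing the error relation for $s^{n+1}=r^{n+1}-r(t^{n+1})$ from \eqref{eq: Nsav3} and splitting as in \eqref{eq: P1P2}, the term $P_2^n$ is dominated by $\|\nabla\Delta\bar e^{n+1}\|\,(C+\|\nabla\Delta\bar e^{n+1}\|)+C\|\bar e^{n+1}\|_{H^1}(C+\|\nabla\Delta\bar e^{n+1}\|)$, where $\|\nabla h(\bar u^{n+1})\|\le C+\|\nabla\Delta\bar e^{n+1}\|$ and the exact part is bounded by $u\in C([0,T];H^3)$. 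The resolution is to avoid pointwise $H^3$ control: in $\delta t\sum_q P_2^q$ I would invoke Cauchy--Schwarz together with the $\ell^2(0,T;H^3)$ bound from Step 2, so that $\delta t\sum_q\|\nabla\Delta\bar e^{q+1}\|\le\sqrt{T}\,\big(\delta t\sum_q\|\nabla\Delta\bar e^{q+1}\|^2\big)^{1/2}\le C\delta t^k$ while the quadratic terms are $O(\delta t^{2k})$. For the consistency term $T_1^n$ I would use the identity $u_t=\Delta h(u)$ to rewrite $r_{tt}=2\|\nabla u_t\|^2+2\lambda\|u_t\|^2-2(g'(u)u_t,u_t)$, so that $|T_1^n|$ requires only $u_t\in L^2(H^1)$. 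The discrete Gronwall lemma then bounds $|s^{m+1}|$, and the closing argument of \eqref{eq: boundxin+1}--\eqref{cond4} (choosing $C_0=3C_4$, using $\eta_1^{n+1}=1-(1-\xi^{n+1})^3$ in the first-order case, and $\delta t\le\min\{(1+4C_0^{k+2})^{-1},(1-\tau_k)/(3k)\}$) yields $|1-\xi^{m+1}|\le C_0\delta t$, completing the induction. Finally $\|e^{m+1}\|_{H^2}\le C\delta t^k$ follows from $\|\bar e^{m+1}\|_{H^2}$ and $\|u^{m+1}-\bar u^{m+1}\|_{H^2}\le|\eta_k^{m+1}-1|\,\bar C\le C_0^{k+1}\bar C\,\delta t^{k+1}$ exactly as in \eqref{eq: errorxin+1h}--\eqref{eq: errorxin+2h}.
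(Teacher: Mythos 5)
Your proposal is correct and follows essentially the same route as the paper's appendix proof: the same three-step induction on $|1-\xi^q|\le C_0\,\delta t$, the same test functions (Step 1 against $\Delta^2 u^q-\tau_k\Delta^2 u^{q-1}$; Step 2 against $\bar e^{n+1}-\tau_k\bar e^n$, $-\Delta\bar e^{n+1}+\tau_k\Delta\bar e^n$ and $\Delta^2\bar e^{n+1}-\tau_k\Delta^2\bar e^n$), Lemma \ref{lemma2} replacing Lemma \ref{lemma}, and, crucially, the same resolution of the Step 3 difficulty: avoid pointwise-in-time $H^3$ control of the error by retaining the dissipation $\delta t\sum_q\|\nabla\Delta\bar e^{q}\|^2\le C\delta t^{2k}$ from Step 2 and using Cauchy--Schwarz in time, which is exactly the mechanism behind \eqref{eq: CHeH2}, \eqref{eq: CHCbar} and \eqref{eq: CHsn+1}.

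The differences are organizational but worth recording. The paper writes out only the case $k=1$ and asserts that $k\ge 2$ follows by combining with the proof of Theorem \ref{ThmAC}; you carry the $\tau_k$-multiplier bookkeeping for general $k$ directly, which is precisely what that combination amounts to, with one caveat you should make explicit: for $\tau_k>0$ the absorptions must be tuned so that a positive fraction of $\delta t\sum_q\|\nabla\Delta\bar e^{q}\|^2$ actually survives (in the Allen--Cahn computation the retained dissipation exactly telescopes away term by term, which is harmless there but not here, since Step 3 genuinely needs the full sum; the margin exists because $\tau_k<1$). In Step 3, the paper bounds $K_2^n$ by $C\bar C(1+\|\nabla\Delta\bar u^{n+1}\|)(\cdots)$ and invokes the $\ell^2$-in-time bound on $\|\nabla\Delta\bar u^{q}\|$ from \eqref{eq: CHCbar}, whereas you bound $\|\nabla h(\bar u^{n+1})\|\le C+\|\nabla\Delta\bar e^{n+1}\|$ using $u\in C([0,T];H^3)$ and work with the error dissipation alone; these are equivalent in substance. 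Finally, your treatment of the consistency term is a genuine small improvement: rewriting $r_{tt}=2\|\nabla u_t\|^2+2\lambda\|u_t\|^2-2(g'(u)u_t,u_t)$ via $u_t=\Delta h(u)$ (valid under the boundary conditions \eqref{CHBC}) needs only $u_t\in L^2(0,T;H^1)$, whereas the paper's cross-reference to \eqref{eq: boundT1} invokes $\|u_{tt}\|_{H^1}$, which is not covered by the stated hypothesis $\partial_t^{k+1}u\in L^2(0,T;L^2)$ when $k=1$ (the paper's bound can be repaired by integrating by parts in \eqref{eq: rtt}, but your identity fits the stated regularity exactly).
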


Since the proof of this theorem shares some similar procedures with the proof of Theorem \ref{ThmAC}, we  shall defer its proof   to the appendix.

\section{Concluding remarks}
We constructed a class of implicit-explicit BDF$k$ SAV schemes, based on the schemes in \cite{HSY20}, for general linear systems. This class of schemes enjoys the following  advantages: (i)
it only requires solving, in most common situations,  one linear system with constant coefficients at each time step, which is the same as the usual IMEX schemes; (ii) it is not restricted to gradient flows and is applicable to  general dissipative systems; and (iii) it can be high-order with
 unconditional stability and suitable for  adaptive time stepping without restriction on time step size; and most importantly, (iv) it leads to a unconditional uniform bound for the numerical solution, for any order $k$ on the norm based on the principal linear term in the energy functional, which is of critical importance for the convergence and error analysis. We presented numerical results which validated the stability and convergence rates of our schemes, and showed that the SAV scheme is at least as accurate as the usual IMEX scheme, and may lead to more accurate solutions in some critical situations (solutions with large gradients or near singularities). 

Using the uniform bound on the norm based on the principal linear operator that we derived for the  BDF$k$ SAV schemes and to a stability result in \cite{nevanlinna1981} for the BDF$k$ $(k=1,2,3,4,5)$ schemes, we were able to establish, with a delicate inductive argument,  rigorous error estimates for the BDF$k$  $(k=1,2,3,4,5)$ SAV schemes in a unified form for the typical Allen-Cahn and Cahn-Hilliard type equations. 

As mentioned in Remark 2, we can replace the BDK$k$ scheme in \eqref{eq: Nsav1} by  other IMEX multistep schemes, and the stability result in Theorem 1 will still hold. However, error analysis for  other implicit-explicit multistep SAV schemes needs to be investigated separately.
\begin{appendix}
\renewcommand{\theequation}{A.\arabic{equation}}
\section{Proof of Theorem \ref{ThmCH}}

For the sake of brevity, we shall only carry out in detail the  error analysis for the first-order case. The analysis for the higher-order cases can be carried out by combining the procedures for the first-order case below and  for the high-order cases in the proof Theorem \ref{ThmAC}. The detail will be left for the interested readers.

As in the proof of Theorem \ref{ThmAC}, we will first prove the following by induction:
\begin{equation}\label{eq: CHpresteps}
|1-\xi^q| \le C_0\,\delta t,\,\, \forall  q\le T/{\delta t},
\end{equation}
where  the constant $C_0$ is dependent on $T,\, \Omega$ and the exact solution $u$ but is independent of $\delta t$, and  will be defined in the proof process.

Under the assumptions, \eqref{eq: CHpresteps}  certainly holds for $q=0$. Now
suppose we have
\begin{equation}\label{eq: CHpresteps2}
|1-\xi^q| \le C_0\,\delta t,\,\, \forall q \le m,
\end{equation}
we shall prove below that \eqref{eq: CHpresteps} holds for $q=m+1$, namely,
\begin{equation}\label{eq: CHxi}
|1-\xi^{m+1}| \le C_0 \delta t.
\end{equation}
We will carry out this proof in three steps.

\textbf{Step 1:  $H^2$ bound for ${u}^{n}$ and $\bar{u}^{n}$ for all $n\le m$.} It follows from Theorem \ref{stableThm} and under condition
\begin{equation}\label{eq: CHdtcond1}
 \delta t \le \min\{\frac{1}{4C_0^3},1\},
\end{equation}
we have
\begin{equation}\label{eq: erroreta}
\frac{3}{4} \le |\eta_1^q| \le 2,\,\,|1-\eta_1^q| \le \frac{\delta t^2}{4}, \forall q\le m,
\end{equation}
and
\begin{equation}\label{eq: CHH1bound}
\|{u}^{q}\|_{H^1} \le M_2,\,\,\forall q\le T/\delta t,\,\,\|\bar{u}^{q}\|_{H^1} \le \frac{4}{3}M_2,\,\forall q\le m.
\end{equation}

Now, consider \eqref{eq: NsavCH1} at step $q$:
\begin{equation}\label{eq: NsavCH1n}
\frac{ u^{q}-\eta_1^{q} u^{n-1}}{ \delta t}=-\Delta^2 u^{q}+ \lambda \Delta u^{q}-\eta_1^{q}\Delta g[\bar{u}^{q-1}]
\end{equation}
Multiply \eqref{eq: NsavCH1n} with $\Delta^2 {u}^{q}$, and by the similar process as step 1 in Theorem \ref{ThmAC}, we can obtain
\begin{equation}\label{eq: CH3.9}
\|\Delta {u}^{q}\|^2-\|\Delta u^{q-1}\|^2+\delta t\| \Delta^2 \bar{u}^{q}\|^2-\frac{\delta t}{2}\| \Delta^2 \bar{u}^{q-1}\|^2 \le C(M_2) \delta t + |1-\eta_1^q|\|u^{q-1}\|^2
\end{equation}

Taking the sum from $0$ to $n\,(\le m)$ of \eqref{eq: CH3.9}, we obtain
\begin{equation*}
\begin{split}
\|\Delta {u}^{n}\|^2+\frac{\delta t}{2} \sum_{q=0}^{n}\| \Delta^2 \bar{u}^{q}\|^2 &\le C(M_2)T+C(u^0)+\delta t^2\sum_{q=1}^{n-1}\|{u}^q\|^2\\
& \le C(M_2)T+C(u^0)+\delta t T M_2^2.
\end{split}
\end{equation*}
with  $C(M_2)$ is a constant only depends on $M_2$ and $C(u^0)$ only depends on $u^0$. Then together with \eqref{eq: CHH1bound} implies
\begin{equation}\label{eq: CHuH2bound}
\|u^n\|_{H^2} \le \sqrt{C(M)T+C(u^0)+ T M_2^2}+M_2:=C_1,\quad\forall n\le m.
\end{equation}
As $ \|u^n\|_{H^2}=\eta_1^n\|\bar{u}^n\|_{H^2}$, \eqref{eq: erroreta} implies
\begin{equation}\label{eq: CHbaruH2bound}
\|\bar{u}^n\|_{H^2} \le \frac{4}{3} C_1,\quad\forall n\le m.
\end{equation}

\textbf{Step 2: estimates for  $\|\bar{e}^{n+1}\|_{H^2}$ and  $\|\bar{e}^{n+1}\|_{H^3}$ for all $0\le n\le m$.} 
By given assumption on the exact solution $u$ and  \eqref{eq: CHbaruH2bound},
 we can choose  $C$ large enough such that
\begin{equation}\label{eq: CHH2first}
\|u(t)\|_{H^3}\le C,\,\, \forall t \le T, \|\bar{u}^q\|_{H^2} \le C,\,\,\forall q\le m,
\end{equation}
and since $H^2 \subset L^{\infty}$, without loss of generality, we can adjust $C$ such that
\begin{equation}\label{eq: CHgfirst}
|g^{(i)}[u(t)]|_{L^\infty}\le C, \,\, \forall t \le T;\,|g^{(i)}(\bar{u}^q)|_{L^\infty} \le C, \,\, \forall q \le m;\, i=0,1,2,3.
\end{equation}

From \eqref{eq: ubarCH}, we can write down the equation for error as
\begin{equation}\label{eq: CHerrorfirst}
\bar{e}^{n+1}-\bar{e}^n=(\eta_1^n-1)\bar{u}^n-\delta t \Delta^2 \bar{e}^{n+1}+\lambda \delta t \Delta \bar{e}^{n+1}+ R^n_1 +\delta t\Delta R^n_2,
\end{equation}
where  $R^n_1$,\, $R^n_2$ are given by
\begin{equation}\label{eq: CHR1first}
R^n_1=u(t^n)-u(t^{n+1})+\delta t u_t(t^{n+1})=\int_{t^n}^{t^{n+1}}(s-t^n)u_{tt}ds,
\end{equation}
and
\begin{equation}\label{eq: CHR2first}
R^n_2=- g(\bar{u}^n)+ g[u(t^{n+1})].
\end{equation}
Taking inner product with $\bar{e}^{n+1}-\Delta \bar{e}^{n+1}+\Delta ^2 \bar{e}^{n+1}$ on both sides of  \eqref{eq: CHerrorfirst}, we obtain
\begin{equation}\label{eq: CHerrorfirst2}
\begin{split}
\frac{1}{2}& \big( \|\bar{e}^{n+1}\|^2-\|\bar{e}^{n}\|^2 \big) +\frac{1}{2}\|\bar{e}^{n+1}-\bar{e}^n\|^2+\delta t \|\Delta \bar{e}^{n+1}\|^2+\lambda \delta t \|\nabla \bar{e}^{n+1}\|^2\\
& +\frac{1}{2} \big( \|\nabla \bar{e}^{n+1}\|^2-\|\nabla \bar{e}^{n}\|^2 \big) +\frac{1}{2}\|\nabla (\bar{e}^{n+1}-\bar{e}^n)\|^2+\delta t \|\nabla \Delta \bar{e}^{n+1}\|^2+\lambda \delta t \|\Delta \bar{e}^{n+1}\|^2\\
& +\frac{1}{2} \big( \|\Delta \bar{e}^{n+1}\|^2-\|\Delta \bar{e}^{n}\|^2 \big) +\frac{1}{2}\|\Delta (\bar{e}^{n+1}-\bar{e}^n)\|^2+\delta t \|\Delta^2 \bar{e}^{n+1}\|^2+\lambda \delta t \|\nabla \Delta \bar{e}^{n+1}\|^2\\
&=(\eta_1^n-1)\big(\bar{u}^n, \bar{e}^{n+1}\big )+\big(R^n_1, \bar{e}^{n+1}\big)-\delta t\big (\nabla R^n_2, \nabla \bar{e}^{n+1} \big)\\
& +(\eta_1^n-1)\big(\nabla \bar{u}^n, \nabla \bar{e}^{n+1}\big )+\big(R^n_1, -\Delta \bar{e}^{n+1}\big)+ \delta t\big (\nabla R^n_2, \nabla \Delta \bar{e}^{n+1} \big)\\
&+(\eta_1^n-1)\big(\Delta \bar{u}^n, \Delta \bar{e}^{n+1}\big )+\big(R^n_1, \Delta^2 \bar{e}^{n+1}\big)+ \delta t\big (\Delta R^n_2, \Delta^2 \bar{e}^{n+1} \big).
\end{split}
\end{equation}
In the following, we bound the right hand side of  \eqref{eq: CHerrorfirst2}. Noting that $|\eta_1^n-1|\le C_0^3\, \delta t^3$, hence
\begin{equation}\label{eq: CHetafirst}
|(\eta_1^n-1)\big(\bar{u}^n, \bar{e}^{n+1}\big )| \le \frac{\|(\eta_1^n-1)\bar{u}^n\|^2}{\delta t} +\frac{\delta t}{4}\|\bar{e}^{n+1}\|^2 \le C C_0^6 \delta t^5 +\frac{\delta t}{4}\|\bar{e}^{n+1}\|^2,
\end{equation}
\begin{equation}\label{eq: CHetafirst2}
|(\eta_1^n-1)\big(\nabla \bar{u}^n, \nabla \bar{e}^{n+1}\big )| \le C C_0^6 \delta t^5 +\frac{\delta t}{4}\|\nabla \bar{e}^{n+1}\|^2,
\end{equation}
and
\begin{equation}\label{eq: CHetafirst3}
|(\eta_1^n-1)\big(\Delta \bar{u}^n, \Delta \bar{e}^{n+1}\big )| \le C C_0^6 \delta t^5 +\frac{\delta t}{4}\|\Delta \bar{e}^{n+1}\|^2.
\end{equation}
It follows from \eqref{eq: CHR1first} that
\begin{equation}\label{eq: CHR1error}
\|R^n_1\|^2 \le C \delta t^3 \int_{t^n}^{t^{n+1}} \|u_{tt}(s)\|^2 ds.
\end{equation}
Therefore,
\begin{equation}\label{eq: CHR1errorfirst}
|\big(R^n_1, \bar{e}^{n+1}\big)| \le \frac{1}{2\delta t}\|R^n_1\|^2+ \frac{\delta t}{2} \|\bar{e}^{n+1}\|^2 \le \frac{\delta t}{2} \|\bar{e}^{n+1}\|^2+C \delta t^2 \int_{t^n}^{t^{n+1}} \|u_{tt}(s)\|^2 ds,
\end{equation}
\begin{equation}\label{eq: CHR1errorfirst2}
|\big(R^n_1, -\Delta \bar{e}^{n+1}\big)| \le \frac{\delta t}{2} \|\Delta \bar{e}^{n+1}\|^2+C \delta t^2 \int_{t^n}^{t^{n+1}} \|u_{tt}(s)\|^2 ds,
\end{equation}
and
\begin{equation}\label{eq: CHR1errorfirst3}
|\big(R^n_1, \Delta^2 \bar{e}^{n+1}\big)| \le \frac{\delta t}{2} \|\Delta^2 \bar{e}^{n+1}\|^2+C \delta t^2 \int_{t^n}^{t^{n+1}} \|u_{tt}(s)\|^2 ds.
\end{equation}
Noting that
\begin{equation}\label{eq: CHnablaR2}
\begin{split}
|\nabla R^n_2|&=|\nabla g(\bar{u}^n)-\nabla g[u(t^n)]+\nabla g[u(t^n)]-\nabla g[u(t^{n+1})]|\\
& \le |g'(\bar{u}^n)\nabla \bar{u}^n-g'[u(t^n)]\nabla u(t^n)|+|g'[u(t^n)]\nabla u(t^n)-g'[u(t^{n+1})]\nabla u(t^{n+1})|\\
& \le |g'(\bar{u}^n)|\big|\nabla \bar{u}^n-\nabla u(t^n)\big|+\big|g'(\bar{u}^n)-g'[u(t^n)]\big|\big|\nabla u(t^n)\big|\\
& +\big |g'[u(t^n)]-g'[u(t^{n+1})]\big|\big|\nabla u(t^n)\big|+\big|g'[u(t^{n+1})]\big|\big|\nabla u(t^n)-\nabla u(t^{n+1})\big|\\
& \le C\big (|\nabla \bar{e}^{n}|+|\bar{e}^n|+\int_{t^n}^{t^{n+1}}\big(|u_t(s)|+|\nabla u_t(s)| \big) ds \big),
\end{split}
\end{equation}
then for the terms with $\nabla R^n_2$, it follows from \eqref{eq: CHnablaR2} that
\begin{equation}\label{eq: CHR2error1}
\begin{split}
\delta t|(\nabla R^n_2, \nabla \bar{e}^{n+1})| &\le \frac{\delta t}{2}\|\nabla R^n_2\|^2+\frac{\delta t}{2}\|\nabla \bar{e}^{n+1}\|^2\\
& \le C \delta t(\|\nabla \bar{e}^{n+1}\|^2+\| \bar{e}^{n}\|_{H^1}^2)+C\delta t^2 \int_{t^n}^{t^{n+1}}\|u_t(s)\|^2_{H^1}ds,
\end{split}
\end{equation}
and
\begin{equation}\label{eq: CHR2error2}
\begin{split}
\delta t|(\nabla R^n_2, \nabla\Delta \bar{e}^{n+1})| &\le \frac{\delta t}{2}\|\nabla R^n_2\|^2+\frac{\delta t}{2}\|\nabla \Delta \bar{e}^{n+1}\|^2\\
& \le C \delta t\| \bar{e}^{n}\|_{H^1}^2+C\delta t^2 \int_{t^n}^{t^{n+1}}\|u_t(s)\|^2_{H^1}ds+\frac{\delta t}{2}\|\nabla \Delta \bar{e}^{n+1}\|^2.
\end{split}
\end{equation}
For the term with $\Delta R^n_2$, since
\begin{equation*}
|\Delta R^n_2|\le |-\Delta g(\bar{u}^n)+\Delta g[u(t^n)]|+|-\Delta g[u(t^n)]+\Delta g[u(t^{n+1})]|:=Q^n_1+Q^n_2,
\end{equation*}
and note that
\begin{equation*}
\Delta g(u)=g''(u)|\nabla u|^2+g'(u)\Delta u,
\end{equation*}
by using \eqref{eq: CHH2first} and \eqref{eq: CHgfirst}, we have
\begin{equation*}
\begin{split}
Q^n_1& \le \big |g''(\bar{u}^n)(|\nabla \bar{u}^n|^2-|\nabla u(t^n)|^2)\big |+\big| |\nabla u(t^n)|^2(g''(\bar{u}^n)-g''[u(t^n)]) \big |\\
& +|g'(\bar{u}^n)(\Delta \bar{u}^n-\Delta u(t^n))|+|\Delta u(t^n)(g'(\bar{u}^n)-g'[u(t^n)])|\\
& \le C \big( |\nabla \bar{e}^n|+|\bar{e}^n|+|\Delta \bar{e}^{n}|),
\end{split}
\end{equation*}
and
\begin{equation*}
Q^n_2 \le C\big(\int_{t^n}^{t^{n+1}}|\nabla u_t(s)|^2 ds+\int_{t^n}^{t^{n+1}}|\Delta u_t(s)|ds \big ).
\end{equation*}
Therefore,
\begin{equation}\label{eq: CHR2error3}
\begin{split}
\delta t |\big (\Delta R^n_2, \Delta ^2 \bar{e}^{n+1} \big)| & \le \delta t |\big (Q^n_1,\Delta^2 \bar{e}^{n+1} \big)|+\delta t |\big (Q^n_2,\Delta^2 \bar{e}^{n+1} \big)|\\
& \le \delta t \|Q^n_1\|^2+\frac{\delta t}{4} \|\Delta^2 \bar{e}^{n+1}\|^2+\delta t \|Q^n_2\|^2+\frac{\delta t}{4} \|\Delta^2 \bar{e}^{n+1}\|^2\\
& \le C \delta t (\|\bar{e}^n\|^2+\|\nabla \bar{e}^n\|^2+\|\Delta \bar{e}^n\|^2)+\frac{\delta t}{2} \|\Delta^2 \bar{e}^{n+1}\|^2 \\
& + C \delta t^2 \int_{t^n}^{t^{n+1}}\| u_t(s)\|_{H^2}^2ds,
\end{split}
\end{equation}
where we used the following inequality
\begin{equation*}
\begin{split}
\int_{\Omega}\big(\int_{t^n}^{t^{n+1}}(|\nabla u_t(s)|+|\Delta u_t(s)|)ds\big)^2 d \bm x & \le \int_{\Omega}\big(\int_{t^n}^{t^{n+1}}(|\nabla u_t(s)|+|\Delta u_t(s)|)^2 ds \int_{t^n}^{t^{n+1}}1ds\big) d\bm x \\
& \le C \delta t \int_{t^n}^{t^{n+1}} \|u_t(s)\|_{H^2}^2 ds.
\end{split}
\end{equation*}
Now, combining \eqref{eq: CHerrorfirst2}-\eqref{eq: CHR2error2} and \eqref{eq: CHR2error3} and dropping some unnecessary terms, we arrive at
\begin{equation}\label{eq: CHbaruH2}
\begin{split}
 \|\bar{e}^{n+1}\|^2-\|\bar{e}^{n}\|^2 &+\|\nabla \bar{e}^{n+1}\|^2-\|\nabla \bar{e}^{n}\|^2 + \|\Delta \bar{e}^{n+1}\|^2-\|\Delta \bar{e}^{n}\|^2+\delta t \|\nabla \Delta \bar{e}^{n+1}\|^2\\
& \le CC_0^6\delta t^5+C \delta t (\|\nabla \bar{e}^{n+1}\|^2+\|\bar{e}^{n+1}\|^2+\|\Delta \bar{e}^{n}\|^2+\|\nabla \bar{e}^{n}\|^2+\| \bar{e}^{n}\|^2)\\
& + C\delta t^2 \int_{t^n}^{t^{n+1}}(\|u_t(s)\|_{H^2}^2+\|u_{tt}(s)\|^2)ds.
\end{split}
\end{equation}
Taking the sum of the above for $n$ from $0$ to $m$, we obtain
\begin{equation}\label{eq: CHbaruH3}
\|\bar{e}^{m+1}\|_{H^2}^2+\delta t \sum_{q=0}^{m}\|\nabla \Delta \bar{e}^{q+1}\|^2 \le C \delta t \sum_{q=0}^{m+1}\|\bar{e}^q\|_{H^2}^2+ C\delta t^2 \int_{0}^{T}(\|u_t(s)\|_{H^2}^2+\|u_{tt}(s)\|^2+C_0^6 \delta t^2 )ds.
\end{equation}
Finally, we can obtain the following estimate for $\bar{e}^{m+1}$ by applying the discrete Gronwall's inequality to \eqref{eq: CHbaruH3} with $\delta t< \frac{1}{2C}$:
\begin{equation}\label{eq: CHerrorbare}
\begin{split}
\|\bar{e}^{n+1}\|_{H^2}^2+\delta t \sum_{q=0}^{n}\|\nabla \Delta \bar{e}^{q+1}\|^2 &\le C\exp((1-\delta t C)^{-1}) \delta t^2 \int_0^{T}(\|u_t(s)\|_{H^2}^2+\|u_{tt}(s)\|^2+C_0^6 \delta t^2 )ds\\
& \le C_2(1+C_0^6 \delta t^2)\delta t^2,\quad \forall\, 0\le n\le m.
\end{split}
\end{equation}
where $C_2$ is independent of $\delta t$ and $C_0$, can be defined as
\begin{equation}\label{C2H1}
C_2:=C\exp(2)\max\big(\int_0^{T}(\|u_t(s)\|_{H^2}^2+\|u_{tt}(s)\|^2)ds, 1 \big),
\end{equation}
and hence $\delta t< \frac{1}{2C}$ can be guaranteed by
$\delta t< \frac{1}{C_2}$.
 In particular, \eqref{eq: CHerrorbare} implies
\begin{equation}\label{eq: CHeH2}
\|\bar{e}^{n+1}\|_{H^2},\,\,\big(\delta t \sum_{q=0}^{n}\|\nabla \Delta \bar{e}^{q+1}\|^2 \big)^{1/2}\le \sqrt{C_2(1+C_0^6\delta t^2)} \delta t,\quad \forall\, 0\le n\le m.
\end{equation}
Combining \eqref{eq: CHH2first} and \eqref{eq: CHeH2}, we obtain that for all $ \forall\, 0\le n\le m$ and under the condition on $
\delta t$ in \eqref{eq: CHdtcond1}, we have
\begin{equation}\label{eq: CHCbar}
\|\bar{u}^{n+1}\|_{H^2},\,\,\big(\delta t \sum_{q=0}^{n}\|\nabla \Delta \bar{u}^{q+1}\|^2 \big)^{1/2} \le \sqrt{C_2(1+C_0^6\delta t^2)}\delta t+ C \le \sqrt{C_2(1+1)}+C:=\bar{C}.
\end{equation}
Note that $H^2 \subset L^{\infty}$, without loss of generality, we can adjust
$\bar C$ so that  we have
\begin{equation}\label{eq: CHCbar2}
\|g(\bar{u}^{n+1})\|,\,\|g'(\bar{u}^{n+1})\| \le \bar C,\quad \forall 0\le n\le m.
\end{equation}

\textbf{Step 3: estimate for $|1-\xi^{n+1}|$.}
It follows from \eqref{eq: Nsav3} that the equation for the error $\{s^j\}$ can be written as
\begin{equation}\label{eq: CHerrorxifirst}
s^{n+1}-s^{n}=
\delta t\big(\|\nabla h[u(t^{n+1})]\|^2-\frac{r^{n+1}}{E(\bar{u}^{n+1})}\|\nabla h(\bar{u}^{n+1})\|^2 \big) +T_1^n,
\end{equation}
where $h(u)=\frac{\delta E}{\delta u}=-\Delta u +\lambda u- g(u)$  and truncation errors $T_1^n$ is given in \eqref{eq: T1} with a bound given in \eqref{eq: boundT1}.

Taking the sum of  \eqref{eq: CHerrorxifirst} for $n$ from $0$ to $m$, since $s^0=0$, we have
\begin{equation*}\label{eq: CHerrorxifirst2}
s^{m+1}=
\delta t\sum_{q=0}^{m}\big(\|\nabla h[u(t^{q+1})]\|^2-\frac{r^{q+1}}{E(\bar{u}^{q+1})}\|\nabla h(\bar{u}^{q+1})\|^2 \big) +\sum_{q=0}^{m}T_1^q.
\end{equation*}
For $\|\nabla h[u(t^{n+1})]\|^2-\frac{r^{n+1}}{E(\bar{u}^{n+1})}\|\nabla h(\bar{u}^{n+1})\|^2 $, we have
\begin{equation*}\label{eq: K1K2}
\begin{split}
\big |&\|\nabla h[u(t^{n+1})]\|^2-\frac{r^{n+1}}{E(\bar{u}^{n+1})}\|\nabla h(\bar{u}^{n+1})\|^2 \big |\\
& \le \|\nabla h[u(t^{n+1})]\|^2\big|1-\frac{r^{n+1}}{E(\bar{u}^{n+1})} \big|+\frac{r^{n+1}}{E(\bar{u}^{n+1})}\big|\|\nabla h[u(t^{n+1})]\|^2-\|\nabla h(\bar{u}^{n+1})\|^2 \big|\\
&:=K^n_1+K^n_2.
\end{split}
\end{equation*}
For $K^n_1$, it follows from \eqref{eq: CHH2first},  $E(\bar{u}^{n+1})>\underbar{C}>0$ and Theorem \ref{stableThm}  that
\begin{equation*}\label{eq: K1}
\begin{split}
K^n_1 & \le C \big|1-\frac{r^{n+1}}{E(\bar{u}^{n+1})} \big|\\
& =C \big|\frac{r(t^{n+1})}{E[u(t^{n+1})]}-\frac{r^{n+1}}{E[u(t^{n+1})]} \big|+C \big|\frac{r^{n+1}}{E[u(t^{n+1})]}-\frac{r^{n+1}}{E(\bar{u}^{n+1})} \big|\\
& \le C \big(|E[u(t^{n+1})]-E(\bar{u}^{n+1})|+|s^{n+1}|\big).
\end{split}
\end{equation*}
For $K^n_2$, it follows from \eqref{eq: CHH2first}, \eqref{eq: CHgfirst}, \eqref{eq: CHCbar}, \eqref{eq: CHCbar2}, $E(\bar{u}^{n+1})>\underbar{C}>0$ and Theorem \ref{stableThm} that
\begin{equation*}\label{eq: K2}
\begin{split}
K^n_2 & \le C \big |\|\nabla h(\bar{u}^{n+1})\|^2-\|\nabla h[u(t^{n+1})]\|^2 \big | \\
& \le C \|\nabla h(\bar{u}^{n+1})-\nabla h[u(t^{n+1})]\|(\|\nabla h(\bar{u}^{n+1})\|+\|\nabla h[u(t^{n+1})]\|)\\
& \le C\bar{C}(1+\|\nabla \Delta \bar{u}^{n+1}\|) \big(\|\nabla \Delta \bar{e}^{n+1}\|+\lambda\|\nabla \bar{e}^{n+1}\|+\|\nabla(g(\bar{u}^{n+1})-g[u(t^{n+1})])\| \big)\\
& \le C\bar{C}\big(\|\nabla \Delta \bar{e}^{n+1}\|+ \|\nabla \bar{e}^{n+1}\| \big)+C\bar{C}\|\nabla \Delta \bar{u}^{n+1}\|\|\nabla \Delta \bar{e}^{n+1}\|+C\bar{C}\|\nabla \Delta \bar{u}^{n+1}\|\|\nabla \bar{e}^{n+1}\|.
\end{split}
\end{equation*}
It then follows from \eqref{eq: CHeH2}, \eqref{eq: CHCbar}  and the  Cauchy-Schwarz inequality that
\begin{equation*}
\delta t \sum_{q=1}^{n+1}\|\nabla \Delta \bar{u}^q\|\|\nabla \bar{e}^{q}\| \le \big(\delta t\sum_{q=1}^{n+1}\|\nabla \Delta \bar{u}^q\|^2 \delta t\sum_{q=1}^{n+1} \|\nabla \bar{e}^{q}\|^2 \big)^{1/2} \le C\bar{C}\sqrt{C_2(1+C_0^6\delta t^2)} \delta t,
\end{equation*}
and
\begin{equation*}
\delta t \sum_{q=1}^{n+1}\|\nabla \Delta \bar{u}^q\|\|\nabla \Delta \bar{e}^{q}\| \le \big(\delta t\sum_{q=1}^{n+1}\|\nabla \Delta \bar{u}^q\|^2 \delta t\sum_{q=1}^{n+1} \|\nabla \Delta \bar{e}^{q}\|^2 \big)^{1/2} \le C\bar{C}\sqrt{C_2(1+C_0^6\delta t^2)} \delta t.
\end{equation*}
For $E[u(t^{n+1})]-E (\bar{u}^{n+1})$, we have estimate \eqref{eq: Lun+1}.

Now, we are ready to estimate $s^{m+1}$. Combine the estimate obtained above, \eqref{eq: CHerrorxifirst2} leads to
\begin{equation}\label{eq: CHsn+1}
\begin{split}
|s^{m+1}| & \le
\delta t\sum_{q=0}^{m}\big |\|\nabla h[u(t^{q+1})]\|^2-\frac{E_0(\bar{u}^{q+1})+r^{q+1}}{E(\bar{u}^{q+1})}\|\nabla h(\bar{u}^{q+1})\|^2 \big|
+\sum_{q=0}^{m}|T_1^q|\\
& \le C\delta t \sum_{q=0}^{m}|s^{q+1}|+C\bar{C}\delta t \sum_{q=0}^{m}\|\bar{e}^{q+1}\|_{H^1}+C\bar{C}\delta t \sum_{q=0}^{m}\|\nabla \Delta {e}^{q+1}\|\\
& +C\bar{C}\delta t \sum_{q=1}^{m+1}\|\nabla \Delta \bar{u}^q\|\|\nabla \bar{e}^{q}\|+C\bar{C}\delta t \sum_{q=1}^{m+1}\|\nabla \Delta \bar{u}^q\|\|\nabla \Delta \bar{e}^{q}\|\\
& + C\delta t\int_0^{t^{m+1}}(\|u_t(s)\|^2_{H^1}+\|u_{tt}(s)\|_{H^1}) ds\\
& \le C \delta t \sum_{q=0}^{m}|s^{q+1}| +C \bar{C}^2 \delta t \big(\sqrt{C_2(1+C_0^6\delta t^2)}+1\big)
\end{split}
\end{equation}
Finally, applying the discrete Gronwall's inequality on \eqref{eq: CHsn+1} with $\delta t< \frac{1}{2C}$ , we obtain the following estimate for $s^{n+1}$:
\begin{equation}\label{eq: CHsn+11}
\begin{split}
|s^{n+1}| & \le C\exp((1-\delta t C)^{-1}) \bar{C}^2 \delta t \big(\sqrt{C_2(1+C_0^6\delta t^2)}+1\big)\\
& \le C_3 \delta t  \big(\sqrt{C_2(1+C_0^6\delta t^2)}+1\big),\forall\, 0\le n\le m,
\end{split}
\end{equation}
where $C_3$ is independent of $\delta t$ and $C_0$, can be defined as
\begin{equation}\label{C3H1}
C_3:=C\bar{C}^2\exp(2).
\end{equation}
Thanks to \eqref{eq: CHsn+11}, we can define $C_0$ and then prove \eqref{eq: CHxi}  by following exactly the same procedure  as {\bf  Step 3 in Theorem \ref{ThmAC}} with the condition
\begin{equation}\label{cond4CH}
\delta t \le \frac1{1+C_0^{3}}
\end{equation}
The induction process for \eqref{eq: CHpresteps} is completed.

Finally, thanks to \eqref{eq: CHeH2}, it remains to show $\|e^{m+1}\|_{H^2}\le C \delta t^k$.

 We derive from  \eqref{eq: CHCbar} that
\begin{equation}\label{eq: CHerrorbar}
 \|u^{m+1}-\bar{u}^{m+1}\|_{H^2} \le |\eta_1^{m+1}-1|
\|\bar u^{m+1}\|_{H^2} \le |\eta_1^{m+1}-1| \bar{C}.
\end{equation}
On the other hand,
\eqref{eq: CHpresteps} implies
\begin{equation}\label{eq: CHerrorxin+1}
|\eta_1^{m+1}-1| \le C_0^3 \delta t^3.
\end{equation}
Then it follows from \eqref{eq: CHeH2}, \eqref{eq: CHerrorbar} and \eqref{eq: CHerrorxin+1} that \begin{equation*}
\begin{split}
\|e^{m+1}\|_{H^2}^2 & \le 2\|\bar{e}^{m+1}\|_{H^2}^2+2\|u^{m+1}-\bar{u}^{m+1}\|_{H^2}^2\\
& \le 2{C_2(1+C_0^6\delta t^2)} \delta t^2 + 2 \bar{C}^2C_0^6 \delta t^6.
\end{split}
\end{equation*}
To summarize, combine the condition \eqref{eq: CHdtcond1} and \eqref{cond4CH} on $\delta t$, we obtain $\|e^{m+1}\|_{H^2}\le C\delta t$ with $\delta t< \frac{1}{1+4C_0^{3}}$. The proof for the case $k=1$ is complete.
\end{appendix}

\bibliographystyle{plain}
\bibliography{bib_sav_error}

\end{document}